\newtheorem{theorem}{Theorem}[section]
\newtheorem{proposition}[theorem]{Proposition}
\newtheorem{lemma}[theorem]{Lemma}
\newtheorem{corollary}[theorem]{Corollary}
\newtheorem{problem}[theorem]{Problem}
\newtheorem{example}[theorem]{Example}
\newenvironment{proof}{{\noindent \sc Proof. }}{\hfill $\Qed$\\}
\newcommand{\la}{\langle}
\newcommand{\ra}{\rangle}
\def\SL{\hbox{\rm SL}}
\def\PGL{\hbox{\rm PGL}}
\def\PSL{\hbox{\rm PSL}}
\def\GL{\hbox{\rm GL}}
\newcommand{\Tr}{\hbox{{\rm Tr}}}
\newcommand{\Qed}{\rule{2.5mm}{3mm}}
\newcommand{\Aut}{\hbox{{\rm Aut}}\,}
\newcommand{\Cay}{\hbox{{\rm Cay}}}
\renewcommand{\S}{\mathcal{S}}
\newcommand{\Sym}{\hbox{{\rm Sym}}}
\newcommand{\ZZ}{\mathbb{Z}}
\newcommand{\CC}{\mathbb{C}}
\newcommand{\mm}[4]{
\left[ \begin{array}{cc}
#1 & #2 \\
#3 & #4
\end{array} \right]
}
\newcounter{case}
\renewcommand{\thecase}{\arabic{case}}
\newcounter{subcase}
\numberwithin{subcase}{case}
\begin{document}


\begin{center}
{\bf\large INTERSECTION DENSITY OF TRANSITIVE GROUPS\\
WITH CYCLIC POINT STABILIZERS} \\ [+4ex]
Ademir Hujdurovi\'c{\small$^{a,b,*,}$}\footnotemark,  \ 
Istv\'an Kov\'acs {\small$^{a,b,}$}\footnotemark,  \ 
Klavdija Kutnar{\small$^{a,b,}$}\footnotemark  \ 
\ and
\addtocounter{footnote}{0} 
Dragan Maru\v si\v c{\small$^{a, b, c,}$}\footnotemark 
\\ [+2ex]
{\it \small 
$^a$University of Primorska, UP IAM, Muzejski trg 2, 6000 Koper, Slovenia\\
$^b$University of Primorska, UP FAMNIT, Glagolja\v ska 8, 6000 Koper, Slovenia\\
$^c$IMFM, Jadranska 19, 1000 Ljubljana, Slovenia}
\end{center}

\addtocounter{footnote}{-3}
\footnotetext{The work of Ademir Hujdurovi\'c  is supported in part by the Slovenian Research Agency (research program P1-0404 and research projects  J1-7051, N1-0062, J1-9110, J1-1690, J1-1694, J1-1695, N1-0140, N1-0159, J1-2451, N1-0208).}
\addtocounter{footnote}{1}
\footnotetext{The work of Istv\'an Kov\'acs  is supported in part by the Slovenian Research Agency (research program P1-0285 and research projects N1-0062, J1-9108, J1-1695, N1-0140, J1-2451, N1-0208, J3-3001).}
\addtocounter{footnote}{1}
\footnotetext{The work of Klavdija Kutnar  is supported in part by the Slovenian Research Agency (research program P1-0285 and research projects N1-0062, J1-9110, J1-9186, J1-1695, J1-1715, N1-0140, J1-2451, J1-2481, N1-0209, J3-3001).}
\addtocounter{footnote}{1}
\footnotetext{
The work of Dragan Maru\v si\v c is supported in part by the Slovenian Research Agency (I0-0035, research program P1-0285 and research projects N1-0062, J1-9108, J1-1695, N1-0140, J1-2451, J3-3001).

~*Corresponding author e-mail:~ademir.hujdurovic@upr.si}

\begin{abstract}
For a  permutation group $G$  acting on a set $V$,
a subset $\mathcal{F}$ of $G$ is said to be an {\em intersecting set}
if for every pair of elements $g,h\in \mathcal{F}$ 
there exists $v \in V$ such that
$g(v) = h(v)$.
The {\em intersection density} $\rho(G)$ of a transitive permutation
group $G$ is the maximum value of the quotient $|\mathcal{F}|/|G_v|$ 
where $G_v$ is a stabilizer of a point $v\in V$ and 
$\mathcal{F}$ runs over all intersecting sets in $G$. 
 If $G_v$ is a largest intersecting set in $G$ then
$G$ is said to have  the {\em Erd\H{o}s-Ko-Rado (EKR)-property}. 
This paper is devoted to the study of transitive permutation groups,
with point stabilizers of  prime order with a special emphasis
given to orders 2 and 3, which do not have the EKR-property.
Among other, 
constructions of infinite family
of transitive permutation groups 
having point stabilizer of order $3$  with 
intersection density $4/3$ 
and of infinite families 
of transitive permutation groups 
having point stabilizer of order $3$
with arbitrarily large intersection density
are given.
\end{abstract}

\begin{quotation}
\noindent {\em Keywords:} 
intersection density, transitive permutation group, derangement graph.
\end{quotation}

\begin{quotation}
\noindent 
{\em Math. Subj. Class.:} 05C25, 20B25.
\end{quotation}


\section{Introductory remarks}
\label{sec:intro}
\noindent

The Erd\H os-Ko-Rado theorem \cite{EKR},
one of the central results in extremal combinatorics, which   
gives a bound on the size of a family of intersecting 
$k$-subsets of a set and classifies the families satisfying the bound,
has been extended in various ways.
This paper is concerned with an extension of 
this theorem to the ambient of 
transitive permutation groups.

For a finite set $V$ let $\Sym(V)$ denote the corresponding symmetric group, and if 
$|V|=n$,  the notation $S_n$ will be adopted.
Given a permutation group $G \leq \Sym(V)$, a subset $\mathcal{F}$ of $G$ is called \emph{intersecting} if, for any two  $g, h \in \mathcal{F}$, there exists $v \in V$ 
such that $g(v)=h(v)$.
The {\em intersection density} $\rho({\cal F})$ of the intersecting set ${\cal F}$
is defined to be the quotient
$$
\rho({\cal F})=\frac{|{\cal F}|}{\max\{|G_v|\colon v\in V\}},
$$
where $G_v$ is the point stabilizer of $v\in V$,
and the {\em intersection density} $\rho(G)$ (see \cite{LSP})
of a group $G$,
is the maximum value of
$\rho({\cal F})$ where ${\cal F}$ runs over all intersecting sets in
$G$. 
Clearly, every coset
$gG_v$, $v\in V$ and $g\in G$,
is an intersecting set, referred to as
a {\em canonical} intersecting set.
Consequently,
$\rho(G)\ge 1$. 
We say that $G$ has the \emph{Erd\H{o}s-Ko-Rado property} (in short {\em EKR-property}), if the size of a maximum intersecting set is equal to the order of the largest point stabilizer, and is said to have 
the \emph{strict Erd\H{o}s-Ko-Rado property} (in short {\em strict-EKR-property})
 if every maximum intersecting set of $G$ is a coset of a point stabilizer. 
It is clear that strict-EKR-property implies EKR-property, but the converse does not hold.  
In particular,
for a transitive group $G$ it follows that
$\rho(G)= 1$
if and only if
the maximum cardinality of an intersecting set is $|G|/|V|$.
Note that if $\mathcal{F}$ is an intersecting set of $G$ and
$f\in \mathcal{F}$ then  $f^{-1}\mathcal{F}$ is an intersecting
of $G$ (see
\cite[Proposition~2.3]{HKMM21}).
Therefore without loss of generality one can consider
those intersecting sets containing the identity. We will
refer to such sets as {\em basic intersecting sets}.

The investigation of the EKR-property of transitive permutation groups and related concepts is an active topic of research (see \cite{HKMM21,HKMM21-n2,HKKMM21,LSP,Meagher19,MR21, MRS21,MS21,R21,S19}).
This paper initiates a program aimed at obtaining deeper understanding
of transitive permutation groups, not having the EKR-property, 
with small point stabilizers.  
As a starting point we consider groups with
cyclic stabilizers of prime order, in particular 
those isomorphic to $\ZZ_2$ and $\ZZ_3$. 
We remark that in this context the question on
whether their orbitals are connected 
and self-paired or not is essential,
see Section~\ref{sec:pre}.  

The paper is organized as follows. 
In Section~\ref{sec:pre} we gather basic definitions and fix the notation. 
Section~\ref{sec:2} deals with groups having point stabilizers of order $2$
while Section~\ref{sec:p} deals with solvable groups having point stabilizers of odd prime order. In Section~\ref{sec:3}  some special results on intersecting sets in groups with point stabilizer of order $3$ are proved.
In Section~\ref{sec:4/3} a construction of an infinite family of groups with point stabilizer of order $3$ and intersection density $4/3$ is given. Finally, in Section~\ref{sec:ado} infinite families of groups with point stabilizer of order $3$ and arbitrarily large intersection density  are constructed.


\section{Preliminaries}
\label{sec:pre}
\noindent

\subsection{Permutation groups and orbital digraphs}
\noindent

Let $G \leq \Sym(V)$. The {\em orbitals} of $G$ are the orbits of $V \times V$ under the canonical action of $G$ on $V \times V$. 
The set $\Delta^*:=\{ (u,v) \colon (v,u) \in \Delta \}$ is also an orbital, and in the case when $\Delta=\Delta^*$, $\Delta$ is called {\em self-paired}. 
Supppose that $G$ is transitive. Then the set 
$\{(v,v) \colon v \in V\}$ is an orbital, and any orbital distinct from this is 
called {\em non-trivial}. For a non-trivial orbital $\Delta$, let 
$\Gamma_\Delta$ be the digraph with vertex set $V$ and arc set 
$\Delta$, that is, the so-called {\em orbital digraph} 
$\Gamma_\Delta=\Gamma(G,\Delta)$ of $G$ relative to $\Delta$.
 We say that $\Delta$ is {\em connected} if so is the associated 
digraph $\Gamma_\Delta$, that is, for 
any two vertices $u$ and $v$, there is a sequence of vertices $u=w_1,w_2,
\ldots, w_k=v$ such that for every $1 \le  i \le k-1$, $(w_i,w_{i+1}) 
\in \Delta$ or 
$(w_{i+1},w_i) \in \Delta$.  
Using a standard terminology, for  a prime $p$
a $p$-element is an element of order a power of $p$.

\begin{proposition}\label{prop:orbital}
Let $G \le \Sym(V)$ be a transitive permutation group and let 
$\Delta$ be a non-trivial orbital containing the pair $(u,v)$. Then
\begin{enumerate}[(i)]
\itemsep=0pt
\item $\Delta$ is self-paired if and only if there exists a 
$2$-element $g \in G$ such that 
$g(u)=v$ and $g^2 \in G_u$, 
\item $\Delta$ is connected if and only if 
$G=\la G_u, g\ra$ for every $g \in G$ with 
$g(u)=v$.    
\end{enumerate}
\end{proposition}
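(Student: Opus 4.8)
The plan is to prove each equivalence by unwinding the definitions of self-paired and connected orbitals and translating them into statements about the point stabilizer $G_u$ and an element $g$ carrying $u$ to $v$. Throughout I will use transitivity to fix the base point $u$ and note that, since $G$ acts transitively, every element of $G$ mapping $u$ to $v$ lies in a single coset $gG_u$; this coset-level freedom is what drives both arguments.

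For part (i), I would first observe that $\Delta$ is self-paired precisely when the pair $(v,u)$ lies in $\Delta$, i.e.\ when some $h\in G$ satisfies $h(u)=v$ and $h(v)=u$. For the forward direction, given such an $h$ I would examine $h^2$, which fixes $u$, so $h$ is a candidate element but need not be a $2$-element; the task is to extract a genuine $2$-element from the cyclic group $\la h\ra$. I would write $|h|=2^a m$ with $m$ odd and consider the power $g=h^{m}$ (or an appropriate odd power), checking that replacing $h$ by this power preserves the property $g(u)=v$ while forcing $g^2\in G_u$ and making $g$ a $2$-element. The key calculation is that since $h^2$ stabilizes $u$ and $h$ swaps $u,v$, the odd power $h^m$ still swaps $u$ and $v$ (because $m$ is odd) and its square is a power of $h^2$, hence fixes $u$. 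The converse is immediate: a $2$-element $g$ with $g(u)=v$ and $g^2\in G_u$ has $g^2(u)=u$, so $g(v)=g(g(u))=g^2(u)=u$, giving $(v,u)\in\Delta$ and thus self-pairing. I expect the main subtlety here to be the careful bookkeeping of orders to guarantee $g$ is genuinely a $2$-element rather than merely an element whose square fixes $u$.

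For part (ii), I would connect graph-theoretic connectivity of $\Gamma_\Delta$ with the group generated by $G_u$ and $g$. The natural framework is to consider the subgroup $H=\la G_u, g\ra$ and show that the orbit of $u$ under $H$ coincides with the connected component of $u$ in $\Gamma_\Delta$. Concretely, I would argue that a vertex $w$ is reachable from $u$ by an undirected path in $\Gamma_\Delta$ if and only if $w=x(u)$ for some $x\in H$: moving along an arc $(a,b)\in\Delta$ or its reverse corresponds to applying an element of $G$ that, modulo the stabilizers of the endpoints, is conjugate into the coset $gG_u$ or $g^{-1}G_u$, and composing such moves stays inside $H$. Since $G$ is transitive, $\Gamma_\Delta$ is connected exactly when this component is all of $V$, i.e.\ when $H$ acts transitively, which (as $H\supseteq G_u$ and $H$ contains an element moving $u$ to $v$) is equivalent to $H=G$.

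The main obstacle I anticipate is in part (ii), namely making the correspondence between undirected walks in the orbital digraph and products of elements from $\la G_u,g\ra$ fully rigorous, since each step of a walk can be realized by many group elements and one must check that the freedom of choice always keeps one inside the generated subgroup. The cleanest way to handle this is to prove the two inclusions (orbit of $u$ under $H$ is contained in the component, and conversely) separately, using that any arc $(w,w')\in\Delta$ satisfies $w'=g'(w)$ for some conjugate $g'$ of $g$ lying in $H$ whenever $w$ itself lies in the $H$-orbit of $u$. I would present part (ii) via this orbit-equals-component lemma and then invoke transitivity to finish, keeping the arithmetic of part (i) as the only genuinely computational piece.
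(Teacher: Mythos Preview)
The paper states this proposition without proof, treating it as a standard background fact in the preliminaries section. Your proposed argument is correct and is the standard one.

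For part (i), extracting the $2$-element via $g=h^{m}$, where $|h|=2^{a}m$ with $m$ odd, works exactly as you describe: since $h$ interchanges $u$ and $v$ (because $h^{2}\in G_u$), oddness of $m$ gives $g(u)=v$, and $g^{2}=h^{2m}\in\la h^{2}\ra\le G_u$, while $g^{2^{a}}=1$ makes $g$ a $2$-element.

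For part (ii), your orbit-equals-component strategy is the right one. The cleanest way to close the gap you flag is to argue at the level of the component $C$ of $u$: each generator of $H=\la G_u,g\ra$ sends $u$ into $C$ (trivially for $G_u$; for $g^{\pm1}$ because $(u,v)\in\Delta$), and since $H$ acts by automorphisms of $\Gamma_\Delta$ this forces $H$ to stabilise $C$ setwise, giving $H\cdot u\subseteq C$. For the reverse inclusion note that the out- and in-neighbours of any $h(u)$ are $h\,G_u\,g(u)$ and $h\,G_u\,g^{-1}(u)$, both contained in $H\cdot u$, so an induction on path length gives $C\subseteq H\cdot u$. Finally, $H$ transitive together with $G_u\le H$ forces $|H|\ge |V|\cdot|G_u|=|G|$, hence $H=G$; and since all elements carrying $u$ to $v$ lie in the single coset $gG_u$, the quantifier ``for every $g$'' is equivalent to ``for some $g$'', as you observe.
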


Let $H$ be a point stabilizer and $\Delta$  an orbital of 
a transitive permutation group $G$.  
Then it is well-known that the orbital digraph 
$\Gamma_\Delta=\Gamma(G,\Delta)$ corresponds 
to the so-called 
{\em double coset graph} 
$\textrm{Cos}(G,H,HgH)$, where $g \in G$ such that $(u,u^g) \in \Delta$ and 
$H=G_u$. In general, for a subset 
$S \subseteq G$, the double coset graph is defined to have vertex set $G/H$ (the set of left cosets of $H$ in $G$) and arc set consisting of pairs $(xH,yH)$, $x, y \in G$ and $x^{-1}y \in HSH$.

We end this subsection with a result about quotient graphs
of arc-transitive graphs which will be needed in Section~\ref{sec:p}.
For terms not defined here we refer the reader to \cite{LP08}
(see also \cite{Lorimer}). 

\begin{proposition}\label{prop:quo}
{\rm\cite[Lemma~2.5]{LP08}}
Let $\Gamma$ be a connected $G$-arc-transitive graph of 
valency $p$ for an odd prime $p$, and let $N \lhd \Aut(\Gamma)$ 
be a semiregular subgroup with at least $3$ orbits. 
Then $\Gamma_N$ is a connected 
$\bar{G}$-arc-transitive graph of valency $p$, where  
$\bar{G}$ is the image of $G$ under its action on $V(\Gamma_N)$.
\end{proposition}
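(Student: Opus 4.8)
The plan is to treat the three assertions---connectivity, arc-transitivity, and valency $p$---in increasing order of difficulty, the last being the real content. Throughout I write $\pi\colon V(\Gamma)\to V(\Gamma_N)$ for the projection sending a vertex to its $N$-orbit. Since $N\lhd\Aut(\Gamma)$ and $G\le\Aut(\Gamma)$, every $g\in G$ normalizes $N$ and hence maps each $N$-orbit onto an $N$-orbit; this is exactly what makes the induced action of $G$ on $V(\Gamma_N)$, and thus $\bar{G}$, well defined. For connectivity I would simply note that $\pi$ carries a walk of $\Gamma$ to a walk of $\Gamma_N$ and is surjective, so connectedness of $\Gamma$ passes to $\Gamma_N$. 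For arc-transitivity I would lift: given two arcs $(B_1,C_1)$ and $(B_2,C_2)$ of $\Gamma_N$, by definition there are arcs $(u_i,v_i)$ of $\Gamma$ with $\pi(u_i)=B_i$ and $\pi(v_i)=C_i$; arc-transitivity of $G$ on $\Gamma$ supplies $g\in G$ with $g(u_1)=u_2$ and $g(v_1)=v_2$, and since $g$ permutes $N$-orbits the image $\bar{g}$ sends $(B_1,C_1)$ to $(B_2,C_2)$.

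The heart of the argument is that $\Gamma_N$ again has valency $p$, i.e.\ that $\pi$ is bijective on each neighbourhood. Fix $u\in V(\Gamma)$ and let $B=\pi(u)$ be its orbit. Because $\Gamma$ is $G$-arc-transitive, $G_u$ acts transitively on the $p$ neighbours of $u$; because $G_u$ normalizes $N$ it preserves the partition of that neighbourhood into $N$-orbit classes; and because $g(Nu)=Nu$ for $g\in G_u$, it fixes $B$ setwise. A transitive group preserving a partition has all classes of a common size $d$ with $d\mid p$, so primality forces $d\in\{1,p\}$. Moreover, the set of neighbours of $u$ lying in $B$ is $G_u$-invariant, so by transitivity it is either empty or the whole neighbourhood.

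It then remains to eliminate the degenerate possibilities, and this elimination---where connectivity and the hypothesis of at least three orbits do the work---is the step I expect to be the main obstacle, everything else being formal. If all $p$ neighbours of $u$ lay in $B$, then by vertex-transitivity every vertex would have all its neighbours in its own $N$-orbit, so each orbit would be a union of components and $\Gamma$ would be disconnected, which is impossible. Hence no neighbour of $u$ lies in $B$, and the neighbours are distributed among orbits different from $B$ in classes of common size $d\in\{1,p\}$. If $d=p$, all $p$ neighbours lie in a single orbit $C\neq B$; applying an automorphism in $G$ that reverses an edge from $u$ to such a neighbour (available by arc-transitivity) and then transporting by $N$ shows that every vertex of $C$ has all its neighbours in $B$ and vice versa, so $B\cup C$ is a union of components and, by connectivity, $V(\Gamma)=B\cup C$, contradicting the assumption of at least three orbits. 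The only surviving case is $d=1$: the $p$ neighbours of $u$ lie in $p$ distinct orbits, all different from $B$, so $\pi$ restricts to a bijection from the neighbourhood of $u$ onto the neighbourhood of $B$ in $\Gamma_N$, whence $\Gamma_N$ has valency $p$. I would close by remarking that oddness of $p$ is not needed beyond primality in this argument; it is retained because the applications in Section~\ref{sec:p} concern odd prime valency.
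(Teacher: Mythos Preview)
The paper does not supply its own proof of Proposition~\ref{prop:quo}; it is quoted verbatim from \cite[Lemma~2.5]{LP08}, so there is nothing to compare against directly. Your argument is correct and is essentially the standard proof one finds in the literature: lift arcs for arc-transitivity, push walks for connectivity, and use that $G_u$ acts transitively on the $p$-element neighbourhood $\Gamma(u)$ while permuting the $N$-orbit classes, so that the common block size divides $p$ and the two degenerate cases are killed by connectivity and the ``at least three orbits'' hypothesis respectively. Your closing remark is also accurate: neither the oddness of $p$ nor, in fact, the semiregularity of $N$ is used in the valency count (normality of $N$ in $\Aut(\Gamma)$, together with $G\le\Aut(\Gamma)$, already forces $G_u$ to permute the $N$-orbits); these hypotheses are carried because they match the setting of \cite{LP08} and the applications in Section~\ref{sec:p}.
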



\subsection{Derangement graphs}
\noindent

Let $G \le \Sym(V)$. The fixed-point-free elements of $G$ are also called {\em derangements}. Following \cite{MRS21}, the {\em derangement graph} of $G$ 
is defined to be the Cayley graph $\Gamma_G=\Cay(G,\mathcal{D})$ with vertex set 
$G$ and edge set consisting of all pairs $(g,h) \in G \times G$ such that 
$g^{-1}h \in \mathcal{D}$, where $\mathcal{D}$ is the set of all derangements of $G$.  
Clearly, $\mathcal{D}$ is closed under conjugation by elements of $G$, which shows 
that $\mathrm{Inn}(G) \le \Aut(\Gamma_G)$, where 
$\mathrm{Inn}(G)$ denotes the group 
of all {\em inner automorphisms} of $G$. 
The lemma below gives an additional kind of automorphisms of 
a derangement graph. 

\begin{lemma}\label{lem:inversion map is automorphism}
For every $G \leq \Sym(V)$, the inverse map $\iota: G \to G$ defined by 
$\iota(g)=g^{-1}$ $(g \in G)$ is an automorphism of the derangement graph $\Gamma_G$.
\end{lemma}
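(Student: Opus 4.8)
The plan is to verify the two defining requirements of a graph automorphism: that $\iota$ is a bijection of the vertex set, and that it preserves adjacency. The first is immediate, since $\iota$ is an involution on $G$ (as $(g^{-1})^{-1}=g$) and hence a bijection. Everything therefore reduces to adjacency preservation.

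By definition, $g$ and $h$ are adjacent in $\Gamma_G=\Cay(G,\mathcal{D})$ precisely when $g^{-1}h\in\mathcal{D}$. Applying $\iota$ sends this pair to $(g^{-1},h^{-1})$, which is an edge exactly when $(g^{-1})^{-1}h^{-1}=gh^{-1}\in\mathcal{D}$. Thus the whole statement amounts to the single equivalence
$$ g^{-1}h\in\mathcal{D}\iff gh^{-1}\in\mathcal{D}. $$
First I would record two elementary closure properties of the derangement set $\mathcal{D}$. Since a permutation and its inverse fix exactly the same points, $d$ is a derangement if and only if $d^{-1}$ is, so $\mathcal{D}=\mathcal{D}^{-1}$. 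Second, as already noted in the text, conjugation merely relabels the fixed-point set (an element $x d x^{-1}$ fixes $v$ iff $d$ fixes $x^{-1}(v)$), so $\mathcal{D}$ is invariant under conjugation by every element of $G$.

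To establish the equivalence I would route $gh^{-1}$ through these two operations rather than attempt to relate $g^{-1}h$ and $gh^{-1}$ directly, since in general they are neither conjugate nor mutually inverse. Writing
$$ hg^{-1}=g\,(g^{-1}h)\,g^{-1}, \qquad gh^{-1}=(hg^{-1})^{-1}, $$
exhibits $gh^{-1}$ as the inverse of a conjugate of $g^{-1}h$. Hence if $g^{-1}h\in\mathcal{D}$, then conjugation-invariance yields $hg^{-1}\in\mathcal{D}$, and $\mathcal{D}=\mathcal{D}^{-1}$ then yields $gh^{-1}\in\mathcal{D}$; the reverse implication follows by symmetry, or simply because $\iota$ is its own inverse. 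This gives the equivalence and hence the lemma. There is no serious obstacle here, as the argument is a short computation; the only point requiring care is the decomposition through conjugation and inversion, the naive direct comparison of $g^{-1}h$ with $gh^{-1}$ being ineffective.
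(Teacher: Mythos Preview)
Your proof is correct, but the paper takes a shorter, more concrete route. Rather than working with adjacency in the Cayley graph and the closure properties of $\mathcal{D}$, the paper argues directly with non-adjacency at the level of points: if $x$ and $y$ are non-adjacent then $x(v)=y(v)=u$ for some $v\in V$, whence $x^{-1}(u)=y^{-1}(u)=v$, so $x^{-1}$ and $y^{-1}$ are non-adjacent. This one-line argument bypasses the need to express $gh^{-1}$ as the inverse of a conjugate of $g^{-1}h$. Your approach, by contrast, isolates the structural reason the inversion map works for \emph{any} Cayley graph whose connection set is inverse-closed and conjugation-invariant, which is a mildly more general observation; the paper's argument is specific to the derangement setting but is more economical.
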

\begin{proof}
Suppose that $x$ and $y$ are two non-adjacent vertices of $\Gamma_G$. Then there exists  points $v,u \in V$ such that $x(v)=u=y(v)$. 
Clearly,  $x^{-1}(u)=v=y^{-1}(u)$, implying that $x^{-1}$ and $y^{-1}$ are also non-adjacent in $\Gamma_G$.
\end{proof}

In the terminology of 
derangement graphs, an intersecting set of a transitive
permutation group 
$G$ is an independent set of $\Gamma_G$, or equivalently, 
a clique of its complement $\overline{\Gamma_G}$. 
Therefore, 
$
\rho(G)=\omega(\overline{\Gamma_G})/|G_v|,
$ 
where the {\em clique number} $\omega(\overline{\Gamma_G})$ is the size 
of a maximum clique of $\overline{\Gamma_G}$.   
In the special case with stabilizers of order $2$ or $3$, 
the complement of the derangement graph is   
arc-transitive. 

\begin{lemma}\label{lem:arc-transitivity for small stabilizer}
Let $G$ be a transitive group with point stabilizers of order $2$ or $3$. 
Then the complement $\overline{\Gamma_G}$ is arc-transitive.
\end{lemma}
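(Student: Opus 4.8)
The plan is to realize $\overline{\Gamma_G}$ as a Cayley graph on $G$ and to exhibit, inside its automorphism group, a subgroup that is both vertex-transitive and transitive on the neighbourhood of a single vertex; together these give arc-transitivity. First I would record the connection set. We have $\overline{\Gamma_G}=\Cay(G,\mathcal{N})$, where $\mathcal{N}$ is the set of non-identity elements of $G$ that fix at least one point of $V$ (indeed, $g$ is adjacent to the identity vertex $1$ in $\overline{\Gamma_G}$ exactly when $g\neq 1$ and $g$ intersects $1$, i.e. $g$ fixes some $v$). Since a non-identity element fixes a point $v$ precisely when it lies in the stabilizer $G_v$, we obtain $\mathcal{N}=\bigcup_{v\in V}\big(G_v\setminus\{1\}\big)$. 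The left-regular representation $G_L\le\Aut(\overline{\Gamma_G})$ acts regularly on vertices, so it remains to make the stabilizer of the vertex $1$ transitive on its neighbourhood, which is exactly $\mathcal{N}$.

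Next I would analyse $\mathcal{N}$ using that $|G_v|$ is prime. Because every $G_v$ has order $2$ or $3$, each non-identity element of $G_v$ is a generator of $G_v$. Fixing a base point $u$ and a generator $a$ of $G_u$, transitivity of $G$ gives $G_v=xG_ux^{-1}$ for some $x$ with $x(u)=v$, so each generator of each $G_v$ equals $xax^{-1}$ or $xa^{-1}x^{-1}$. Hence $\mathcal{N}=a^{G}\cup (a^{-1})^{G}$, the union of the conjugacy classes of $a$ and $a^{-1}$ (which coincide in the order-$2$ case, where $a=a^{-1}$).

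Finally I would assemble the relevant automorphisms fixing the vertex $1$. Conjugation gives $\mathrm{Inn}(G)\le\Aut(\Gamma_G)=\Aut(\overline{\Gamma_G})$ fixing $1$, with orbits on $\mathcal{N}$ exactly $a^{G}$ and $(a^{-1})^{G}$; and by Lemma~\ref{lem:inversion map is automorphism} the inversion map $\iota$ is an automorphism of $\Gamma_G$, hence of $\overline{\Gamma_G}$, fixing $1$ and sending $a^{G}$ to $(a^{-1})^{G}$ since $\iota(xax^{-1})=xa^{-1}x^{-1}$. Therefore $\la \mathrm{Inn}(G),\iota\ra$ lies in the stabilizer of $1$ and acts transitively on $\mathcal{N}$. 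Combined with the vertex-transitivity of $G_L$, this yields arc-transitivity of $\overline{\Gamma_G}$.

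The only real subtlety, and the point the argument is built to handle, is the order-$3$ case in which $a$ is not conjugate to $a^{-1}$: there $\mathcal{N}$ splits into two $\mathrm{Inn}(G)$-orbits, and conjugation alone fails to be neighbourhood-transitive. The inversion automorphism $\iota$ is exactly what bridges these two classes, so the essential step is confirming $\iota(a^{G})=(a^{-1})^{G}$, which is immediate from the displayed identity above.
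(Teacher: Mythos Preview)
Your proof is correct and follows essentially the same approach as the paper: identify $\overline{\Gamma_G}$ as the Cayley graph on $G$ with connection set $\mathcal{S}=C\cup C^{-1}$ for $C$ the conjugacy class of a non-identity element of a point stabilizer, and then use $\mathrm{Inn}(G)$ together with the inversion map $\iota$ (Lemma~\ref{lem:inversion map is automorphism}) to obtain transitivity of the vertex stabilizer of $1$ on $\mathcal{S}$. Your write-up is somewhat more explicit about why the connection set is a single class union its inverse (via the prime-order stabilizer hypothesis), but the argument is the same.
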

\begin{proof}
Observe that $\overline{\Gamma_G}=\Cay(G,\S)$ where $\cal{S}$ is the set of all non-identity elements of $G$ that fix a point. As $G$ is transitive, its point stabilizers are conjugate. Thus $\S=C \cup C^{-1}$, where 
$C$ is the conjugacy class of some $g \in G$ fixing a point.  
Using the fact that $\mathrm{Inn}(G) \le \Aut(\overline{\Gamma_G})$ and 
that $\iota \in \Aut(\overline{\Gamma_G})$, we obtain that the stabilizer of the identity element $1=1_G$  of $G$ in $\Aut(\overline{\Gamma_G})$ acts transitively on $\cal{S}$.
Hence $\overline{\Gamma_G}$ is   arc-transitive. 
\end{proof}

For a prime power $q$ with $q \equiv 1 \pmod{4}$, the {\em Paley graph} $P_q$ is defined to be the Cayley graph of $\mathbb{F}_q^+$, 
the additive group of the finite field
$\mathbb{F}_q$ with $q$ elements, 
whose connection set consists of all non-zero squares in $\mathbb{F}_q$. 

\begin{proposition}\label{prop:clique number of Paley}
{\rm \cite[Theorem~1]{BDR88}}
Let $q=p^{2n}$ where $p$ is a prime such that $q \equiv 1 \pmod{4}$.  
Then $\omega(P_q)=\sqrt{q}$.
\end{proposition}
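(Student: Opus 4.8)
The plan is to prove the two inequalities $\omega(P_q) \ge \sqrt{q}$ and $\omega(P_q) \le \sqrt{q}$ separately, writing $\sqrt{q} = p^n$ throughout, since $q = p^{2n}$. Observe first that $q \equiv 1 \pmod 4$ forces $p$ to be odd, a fact both bounds will use.

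For the lower bound, the natural candidate for a maximum clique is the subfield $\mathbb{F}_{p^n} \le \mathbb{F}_q$. I would show that every nonzero element of $\mathbb{F}_{p^n}$ is a square in $\mathbb{F}_q$: an element $x \in \mathbb{F}_{p^n}^*$ is a square in $\mathbb{F}_q$ precisely when $x^{(q-1)/2} = 1$, and since $(q-1)/2 = (p^n - 1)(p^n + 1)/2$ is divisible by $p^n - 1$ (here $(p^n+1)/2 \in \ZZ$ because $p$ is odd), while $x^{p^n - 1} = 1$, this indeed holds. Consequently, for distinct $a,b \in \mathbb{F}_{p^n}$ the difference $a-b$ is a nonzero square, so $a$ and $b$ are adjacent in $P_q$. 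Thus $\mathbb{F}_{p^n}$ is a clique of size $p^n = \sqrt{q}$, giving $\omega(P_q) \ge \sqrt{q}$.

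For the upper bound I would exploit two structural features of Paley graphs. First, $P_q$ is vertex-transitive, being a Cayley graph of $\mathbb{F}_q^+$. Second, $P_q$ is self-complementary: multiplication by a fixed non-square $g$ is a bijection of $\mathbb{F}_q$ sending nonzero squares to nonzero non-squares, hence an isomorphism $P_q \to \overline{P_q}$ (this is where $q \equiv 1 \pmod 4$ enters, via $-1$ being a square so that adjacency is symmetric). From self-complementarity, $\alpha(P_q) = \omega(\overline{P_q}) = \omega(P_q)$. I would then invoke the clique--coclique bound for vertex-transitive graphs, namely $\alpha(P_q)\cdot\omega(P_q) \le |V(P_q)| = q$, to conclude $\omega(P_q)^2 \le q$, that is, $\omega(P_q) \le \sqrt{q}$. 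Combined with the lower bound this yields $\omega(P_q) = \sqrt{q}$.

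The main obstacle is the upper bound, concretely the clique--coclique inequality, which is the only non-elementary ingredient. An alternative, self-contained route avoids it entirely: $P_q$ is a strongly regular graph with eigenvalues $(q-1)/2$ and $(-1 \pm \sqrt{q})/2$, computable from quadratic character sums, and feeding the least eigenvalue $\lambda_{\min} = (-1-\sqrt{q})/2$ into the Hoffman ratio bound $\alpha(P_q) \le \frac{q\,(-\lambda_{\min})}{k - \lambda_{\min}}$ with $k=(q-1)/2$ simplifies exactly to $\sqrt{q}$. There the difficulty merely shifts to the eigenvalue computation. I would prefer the clique--coclique argument, whose sole nontrivial input is the standard vertex-transitive bound, as it isolates the combinatorial heart of the statement and keeps the proof short.
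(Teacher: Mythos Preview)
Your argument is correct. The paper does not supply a proof of this proposition; it simply records the statement with a citation to \cite{BDR88} and invokes it as a black box in the proof of Theorem~\ref{the:ado}, so there is no in-paper argument to compare against. For what it is worth, the route you outline --- the subfield $\mathbb{F}_{p^n}$ as a clique for the lower bound, and self-complementarity combined with the clique--coclique inequality for vertex-transitive graphs for the upper bound --- is the standard proof of this fact.
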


\subsection{Intersection density}
\noindent

In this subsection we list four results from \cite{HKMM21}.

\begin{proposition}\label{pro:regular}
{\rm (\cite[Proposition~2.6]{HKMM21})}
Let $G$ be a transitive permutation group containing a
semiregular subgroup $H$ with $k$ orbits. 
Then $\rho(G) \le k$. In particular, if $H$ is regular then $\rho(G)=1$.
\end{proposition}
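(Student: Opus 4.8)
The plan is to exploit the translation, recorded just above this proposition, between intersecting sets and independent sets of the derangement graph $\Gamma_G$, together with the defining feature of semiregularity: since the only element of $H$ fixing a point is the identity, \emph{every} non-identity element of $H$ is a derangement. First I would take an arbitrary intersecting set $\mathcal{F} \subseteq G$ and partition $G$ into the left cosets of $H$. The crucial observation is that $\mathcal{F}$ can meet each such coset at most once. Indeed, if $g_1, g_2$ lie in the same left coset $gH$ with $g_1 \ne g_2$, then $g_1^{-1} g_2$ is a non-identity element of $H$, hence a derangement; consequently there is no $v \in V$ with $g_1(v) = g_2(v)$, so $g_1$ and $g_2$ cannot both belong to an intersecting set.

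Next I would count the cosets. Because $H$ is semiregular with $k$ orbits, each orbit has size $|H|$, so $|V| = k\,|H|$. Combining this with the orbit–stabilizer identity $|G| = |V|\cdot|G_v|$ (valid since $G$ is transitive) gives
$$
[G:H] = \frac{|G|}{|H|} = \frac{|V|\cdot|G_v|}{|H|} = k\,|G_v|.
$$
Since $\mathcal{F}$ meets each of the $[G:H]$ left cosets of $H$ at most once, we obtain $|\mathcal{F}| \le k\,|G_v|$, and therefore $\rho(\mathcal{F}) = |\mathcal{F}|/|G_v| \le k$. As $\mathcal{F}$ was an arbitrary intersecting set, this yields $\rho(G) \le k$.

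For the final assertion, when $H$ is regular we have $k = 1$, so the bound reads $\rho(G) \le 1$; together with the general lower bound $\rho(G) \ge 1$ coming from canonical intersecting sets, this forces $\rho(G) = 1$. The argument is essentially a counting argument, and no single step presents a serious obstacle; the only point requiring care is the bookkeeping that links semiregularity to the derangement property and the verification of the index formula $[G:H] = k\,|G_v|$, where one must keep the distinction between the ``intersecting'' condition and its reformulation via $g_1^{-1}g_2 \in \mathcal{D}$ straight, and consistently work with left (rather than right) cosets.
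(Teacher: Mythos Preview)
Your argument is correct. The key observation---that every non-identity element of a semiregular subgroup is a derangement, so an intersecting set meets each left coset of $H$ at most once---is exactly the right idea, and the counting via $[G:H]=k\,|G_v|$ is clean and accurate.

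Note, however, that the present paper does not actually supply a proof of this proposition; it merely quotes the result from \cite[Proposition~2.6]{HKMM21}. Your proof is the standard one and almost certainly coincides with the argument in that reference.
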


\begin{proposition}\label{prop:semi-inv}
{\rm \cite[Proposition~3.1]{HKMM21}}
Let $G$ be a transitive permutation group admitting a semiregular subgroup $H$ whose
orbits form a $G$-invariant partition $\mathcal{B}$, and let $\bar{G}$ be 
the image of $G$ under its action on $\mathcal{B}$. 
Then $\rho(G) \le \rho(\bar{G})$.
\end{proposition}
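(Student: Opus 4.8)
The plan is to exhibit an intersecting set in $G$ of size at least $|H| \cdot \omega(\overline{\Gamma_{\bar G}})$ by pulling back a maximum intersecting set of $\bar G$ through the projection $\pi \colon G \to \bar G$, and to bound $\rho(G)$ in terms of this. First I would set up the projection. Since the orbits of the semiregular subgroup $H$ form a $G$-invariant partition $\mathcal{B}$, the action of $G$ on $\mathcal{B}$ yields a transitive permutation group $\bar{G}$ together with a surjective homomorphism $\pi\colon G \to \bar{G}$. The key structural fact to record is how point stabilizers relate: if $v \in V$ lies in the block $B \in \mathcal{B}$, then $G_v \le G_{\{B\}}$ and $\pi(G_{\{B\}}) = \bar{G}_B$, so the relevant orders can be compared via $|G| = |H| \cdot |\bar{G}|$ (using that $H$ is the kernel of $\pi$ when its orbits are exactly the blocks, or at least that $|\ker \pi|$ is a suitable multiple of the block size).

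The core of the argument is the following correspondence between intersecting sets. If $\bar{\mathcal{F}} \subseteq \bar{G}$ is intersecting in $\bar{G}$, I claim its full preimage $\mathcal{F} := \pi^{-1}(\bar{\mathcal{F}})$ is intersecting in $G$. Indeed, take $g, h \in \mathcal{F}$; then $\bar{g} := \pi(g)$ and $\bar{h} := \pi(h)$ lie in $\bar{\mathcal{F}}$, so there is a block $B \in \mathcal{B}$ with $\bar{g}(B) = \bar{h}(B)$. This means $g(B) = h(B)$ as blocks; to upgrade this to an honest agreement $g(v) = h(v)$ for some $v \in V$, I would use that $h^{-1}g$ maps $B$ to itself, hence $h^{-1}g$ stabilizes the block setwise. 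The point I must nail down is that a block-stabilizing element must fix some point \emph{inside} that block. This is exactly where the semiregularity of $H$ is decisive: since $H$ acts regularly (semiregularly with the block as a single orbit) on the points of $B$, an element fixing $B$ setwise need not fix a point in general, so care is required — one really wants the intersecting property of $\bar{\mathcal{F}}$ to transfer cleanly.

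Granting the correspondence, the counting finishes the job. I would argue $|\mathcal{F}| = |\ker \pi| \cdot |\bar{\mathcal{F}}| = |H| \cdot |\bar{\mathcal{F}}|$, and taking $\bar{\mathcal{F}}$ to be a maximum intersecting set of $\bar{G}$ gives $|\bar{\mathcal{F}}| = \rho(\bar{G}) \cdot |\bar{G}_B|$. Dividing by $|G_v|$ and using $|H| \cdot |\bar{G}_B| = |G_{\{B\}}| = |G_v| \cdot [G_{\{B\}} : G_v]$, the factors are designed to collapse so that $\rho(G) \ge |\mathcal{F}|/|G_v|$ reproduces $\rho(\bar{G})$, and hence $\rho(G) \le \rho(\bar{G})$ follows once I check the inequality goes the intended direction (the statement is an upper bound on $\rho(G)$, so in fact the preimage construction must be run the other way — it shows every large intersecting set in $G$ \emph{descends} to one in $\bar{G}$, not the reverse).

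The main obstacle, and the step I would spend the most care on, is the direction of the argument. Since the conclusion $\rho(G) \le \rho(\bar{G})$ bounds $G$ from above, I cannot merely lift intersecting sets upward; I must instead take a maximum intersecting set $\mathcal{F} \subseteq G$ and produce from it an intersecting set $\bar{\mathcal{F}} \subseteq \bar{G}$ of proportionally large density. The natural candidate is $\bar{\mathcal{F}} = \pi(\mathcal{F})$, and the delicate point is proving that the fibers of $\pi$ restricted to $\mathcal{F}$ all have the same size (namely $|H|$), so that $|\bar{\mathcal{F}}| = |\mathcal{F}|/|H|$ — equivalently, that $\mathcal{F}$ is a union of whole cosets of $\ker\pi$, or that one may assume so without decreasing density. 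Establishing that $\pi(\mathcal{F})$ is genuinely intersecting in $\bar G$ (two images agreeing on a block, inherited from two elements agreeing on a point) is the routine half; controlling the fiber sizes, likely via the semiregularity of $H$ and a density-averaging argument over the $H$-cosets, is the crux.
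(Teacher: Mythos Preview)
The paper does not supply its own proof of this proposition; it is quoted verbatim from \cite{HKMM21} and used as a black box. So there is no in-paper argument to compare against, and I assess your outline on its own terms.

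Your final paragraph points in the right direction---start from a maximum intersecting set $\mathcal{F}\subseteq G$, push it down to $\pi(\mathcal{F})\subseteq\bar G$, and compare densities---but the mechanism you propose for controlling the fibres is off. You aim to show that $\mathcal{F}$ is a union of full $\ker\pi$-cosets, so that each fibre has size $|\ker\pi|$, which you further want to equal $|H|$. Neither claim holds in general: $H$ need not be normal, so $\ker\pi$ may strictly contain $H$; and an intersecting set such as a point stabiliser $G_v$ meets each right $H$-coset in \emph{at most one} element (since $G_v\cap H=1$), the exact opposite of being a union of cosets. Your suggested ``density-averaging over $H$-cosets'' is never made precise, and this is the genuine gap.

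The missing idea is precisely that last observation, and it is where semiregularity enters cleanly. If $g_1,g_2\in\mathcal{F}$ lie in the same right coset $gH$, then $g_1^{-1}g_2\in H$; but $g_1^{-1}g_2$ also fixes a point (as $\mathcal{F}$ is intersecting), so semiregularity forces $g_1=g_2$. Hence $|\mathcal{F}H|=|\mathcal{F}|\,|H|$, and with $K:=\ker\pi$ one gets
\[
|\pi(\mathcal{F})|=\frac{|\mathcal{F}K|}{|K|}\ \ge\ \frac{|\mathcal{F}H|}{|K|}=\frac{|\mathcal{F}|\,|H|}{|K|}.
\]
Since $\pi(\mathcal{F})$ is intersecting in $\bar G$ (the easy half you already noted) and $|\bar G_B|=|G_{\{B\}}|/|K|=|H|\,|G_v|/|K|$, this yields
\[
\rho(\bar G)\ \ge\ \frac{|\pi(\mathcal{F})|}{|\bar G_B|}\ \ge\ \frac{|\mathcal{F}|\,|H|/|K|}{|H|\,|G_v|/|K|}\ =\ \frac{|\mathcal{F}|}{|G_v|}\ =\ \rho(G).
\]
So no averaging is needed; the required bound on fibre sizes drops out of the one-line $H$-coset argument, and that is the step your outline does not supply.
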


\begin{proposition}\label{thm:deg-p}
{\rm \cite[Theorem~1.4]{HKMM21}}
For a transitive permutation group $G$ of prime power degree the intersection density
$\rho(G)$ is equal to $1$.
\end{proposition}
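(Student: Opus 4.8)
The plan is to prove that the maximum size of an intersecting set of $G$ (equivalently, of an independent set of $\Gamma_G$) is exactly $|G_v|$. Since every coset of a point stabilizer is an intersecting set, one always has intersecting sets of size $|G_v|$, so it suffices to establish the upper bound $|\mathcal F|\le |G_v|$ for every intersecting set $\mathcal F$. For this I would invoke the clique--coclique bound: in a vertex-transitive graph the product of the size of a clique and the size of an independent set is at most the number of vertices; for the Cayley graph $\Gamma_G=\Cay(G,\mathcal D)$ this follows by a short counting argument using the left translations, which act transitively. Now a clique of $\Gamma_G$ is a set $S\subseteq G$ such that $s^{-1}s'\in\mathcal D$ for all distinct $s,s'\in S$, that is, $s(v)\ne s'(v)$ for all $v\in V$; fixing any $v$ shows that $s\mapsto s(v)$ is injective on $S$, so $|S|\le |V|=p^n$, with equality precisely when $S$ is a sharply transitive set on $V$. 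Thus the whole statement reduces to producing a sharply transitive set of size $p^n$ inside $G$: such a clique $S$ forces $|\mathcal F|\le |G|/p^n=|G_v|$ and hence $\rho(G)=1$.

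First I would locate a transitive $p$-subgroup in which to build such a set, namely a Sylow $p$-subgroup $P$ of $G$. Writing $|G_v|=p^bc$ with $p\nmid c$ gives $|G|=p^n|G_v|=p^{n+b}c$, so $|P|=p^{n+b}$. As $P\cap G_v$ is a $p$-subgroup of $G_v$, its order is at most $p^b$, whence the $P$-orbit of $v$ has size $|P:P\cap G_v|\ge p^n$; since this orbit size also divides $|V|=p^n$, it equals $p^n$ and $P$ is transitive. It therefore remains to construct a sharply transitive set inside an arbitrary transitive $p$-group $P$ of degree $p^n$.

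This I would do by induction on $n$. For $n\le 1$ the group $P$ contains a regular cyclic subgroup of order $p$ (for $n=1$, $P\le \Sym(V)=S_p$ forces $P\cong\ZZ_p$), and its elements form a sharply transitive set. For $n\ge 2$ I would choose $z\in Z(P)$ of order $p$. Centrality forces each power $z^k$ ($1\le k\le p-1$) to be fixed-point-free, for if $z^k(v)=v$ then $z^k(gv)=g\,z^k(v)=gv$ for all $g\in P$, making $z^k$ trivial; hence $\langle z\rangle$ is semiregular with orbits of size $p$, and since $\langle z\rangle\lhd P$ these orbits form a $P$-invariant block system $\mathcal B$ with $p^{n-1}$ blocks. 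The group $\bar P$ induced on $\mathcal B$ is a transitive $p$-group of degree $p^{n-1}$, so by induction it admits a sharply transitive set $\{\bar s_1,\dots,\bar s_{p^{n-1}}\}$; I would pick arbitrary preimages $s_i\in P$ and set $S=\{\,z^a s_i : 0\le a\le p-1,\ 1\le i\le p^{n-1}\,\}$. This set has $p^n$ elements and is sharply transitive: given $u,w\in V$ there is a unique $i$ for which $\bar s_i$ sends the block of $u$ to the block of $w$, and then, since $\langle z\rangle$ acts regularly on that block, a unique $a$ with $z^a s_i(u)=w$.

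The conceptual heart of the argument---and the step I would be most careful about---is this inductive construction. The point that makes it go through cleanly is that only set-theoretic lifts $s_i$ of the $\bar s_i$ are required: the central subgroup $\langle z\rangle$ already acts regularly within each block and supplies, through the powers $z^a$, the missing within-block coordinate, so no splitting of the extension $P\to\bar P$ is needed. The routine checks to carry out are that the $p^n$ elements $z^a s_i$ are pairwise distinct and that the pair $(a,i)$ solving $z^a s_i(u)=w$ is unique; both follow from $\bar z=1$ together with the regularity of $\langle z\rangle$ on each block. Granting the sharply transitive set $S$, the clique--coclique bound closes the argument as above, yielding $\rho(G)=1$.
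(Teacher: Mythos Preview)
The paper does not supply its own proof of this proposition; it is quoted verbatim from \cite[Theorem~1.4]{HKMM21} as a preliminary fact. So there is nothing in the present paper to compare your argument against.

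That said, your proof is correct and self-contained. The two ingredients are exactly the right ones: the clique--coclique inequality for the vertex-transitive Cayley graph $\Gamma_G$, and the existence of a clique of size $|V|=p^n$, i.e.\ a sharply transitive subset of $G$. Your reduction to a transitive Sylow $p$-subgroup $P$ is clean (the count $|P:P\cap G_v|\ge p^n$ forcing transitivity is the standard one), and the inductive construction of a sharply transitive set in $P$ via a central element $z$ of order $p$ is sound: centrality of $z$ and transitivity of $P$ force $\langle z\rangle$ to be semiregular, the induced block system lets you apply induction to $\bar P$, and the lift $S=\{z^as_i\}$ works precisely because $\bar z=1$ and $\langle z\rangle$ is regular on each block. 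The distinctness and uniqueness checks you sketch go through exactly as stated.

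One remark: the paper records as Proposition~\ref{pro:regular} (also from \cite{HKMM21}) that a transitive group containing a regular subgroup has $\rho(G)=1$. Your argument can be viewed as the natural strengthening of that idea, replacing ``regular subgroup'' by ``sharply transitive subset''; this is essential here, since a transitive $p$-group of degree $p^n$ need not contain a regular subgroup, but---as your induction shows---always contains a sharply transitive set.
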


We end this subsection with a lemma that can be extracted
from the proof of \cite[Theorem~1.5]{HKMM21}

\begin{lemma}\label{lem:bloki-2}
{\rm \cite{HKMM21}}
Let $p$ be a prime and let $G$ be a transitive permutation group
of degree $2p$. If $G$ admits $G$-invariant partition  $\mathcal{B}$
consisting of two blocks then $\rho(G)=1$.
\end{lemma}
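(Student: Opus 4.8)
The plan is to reduce the problem to a quotient group of degree $2$, where the prime power result (Proposition~\ref{thm:deg-p}) applies, by way of Proposition~\ref{prop:semi-inv}. Concretely, I would aim to exhibit a semiregular subgroup of $G$ of order $p$ whose two orbits are exactly the blocks of $\mathcal{B}$; feeding this into Proposition~\ref{prop:semi-inv} collapses everything to a degree-$2$ action. Throughout, write $\mathcal{B}=\{B_1,B_2\}$; since $G$ is transitive on $V$ and $|B_i|<2p$, the induced action on $\mathcal{B}$ must itself be transitive, so each block has size $p$ and the kernel $K$ of the action of $G$ on $\mathcal{B}$ has index $2$.

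First I would pin down the structure of $K$. As $K$ fixes each block setwise, it embeds faithfully in $\Sym(B_1)\times\Sym(B_2)\cong S_p\times S_p$. I would also check that $K$ acts transitively on each block: for $v\in B_1$ the stabilizer $G_v$ fixes the block $B_1$, and since there are only two blocks it fixes $B_2$ as well, so $G_v\le K$; then a one-line count using $|K|=|G|/2=p\,|G_v|$ shows the $K$-orbit of $v$ has size $p$ and hence equals $B_1$ (and symmetrically for $B_2$).

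The crux — and the step I expect to be the main obstacle — is to produce a single element $x\in K$ of order $p$ acting as a $p$-cycle on \emph{both} blocks simultaneously. I would argue with a Sylow $p$-subgroup $P$ of $K$. Its projections $\pi_i(P)$ onto the two factors $\Sym(B_i)$ are homomorphic images of a Sylow $p$-subgroup, hence Sylow $p$-subgroups of the transitive degree-$p$ groups $\pi_i(K)$; since the $p$-part of $|S_p|$ equals $p$, each $\pi_i(P)$ is cyclic of order $p$ generated by a $p$-cycle $\sigma_i$. Thus $P$ sits inside $\langle\sigma_1\rangle\times\langle\sigma_2\rangle\cong\ZZ_p\times\ZZ_p$ while surjecting onto both factors, and a short inspection of subgroups of $\ZZ_p\times\ZZ_p$ forces $P$ to contain an element whose image in each coordinate is a generator — that is, an element $x$ acting as a $p$-cycle on each block. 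This is precisely where the degree $2p$ and the primality of $p$ are used, so it is the delicate part.

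Finally, $\langle x\rangle$ is semiregular of order $p$ with orbit set exactly $\{B_1,B_2\}=\mathcal{B}$. Proposition~\ref{prop:semi-inv} then gives $\rho(G)\le\rho(\bar G)$, where $\bar G$ is the image of $G$ acting on $\mathcal{B}$, a transitive group of degree $2$. Since degree $2$ is a prime power, Proposition~\ref{thm:deg-p} yields $\rho(\bar G)=1$, and combined with the general bound $\rho(G)\ge 1$ this gives $\rho(G)=1$. (The boundary case $p=2$, for which $2p=4$ is itself a prime power, is in any event covered directly by Proposition~\ref{thm:deg-p}.)
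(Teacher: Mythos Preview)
The paper does not supply its own proof of this lemma; it is merely stated as being extractable from the proof of \cite[Theorem~1.5]{HKMM21}. Your argument is correct and is self-contained within the present paper's framework: the verification that $G_v\le K$ and hence that $K$ is transitive on each block is sound; the claim that $\pi_i(P)$ is a Sylow $p$-subgroup of $\pi_i(K)$ holds because surjective homomorphisms send Sylow $p$-subgroups to Sylow $p$-subgroups; and the subdirect-product analysis inside $\ZZ_p\times\ZZ_p$ correctly produces an element acting as a $p$-cycle on both blocks. Feeding the resulting semiregular $\langle x\rangle$ into Proposition~\ref{prop:semi-inv} and then Proposition~\ref{thm:deg-p} is exactly right.
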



\subsection{Properties of $\PSL(2,q)$}
\noindent

We will need two  well-known results about the group $\PSL(2,q)$.
The first  one can be easily   derived from the structure of subgroups of 
$\PSL(2,q)$ (see, for example, \cite{Sz}), so we omit the proof.
For the second one we refer the reader to \cite{Ma}.

\begin{proposition}\label{prop:3-class}
Let $q=3^n$ and $n > 1$. Then 
$\PSL(2,q)$ has one conjugacy class of subgroups of order $3$ if $n$ is odd, and two otherwise.
\end{proposition}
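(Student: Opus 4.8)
The plan is to reduce the statement to a count of conjugacy classes of order-$3$ subgroups in $\SL(2,q)$ and then resolve that count by the parity of $n$. First I would record that, since $q=3^n$, the defining characteristic is $3$, so every element of order $3$ is unipotent: the maximal tori of $\PSL(2,q)$ have orders $(q-1)/2$ and $(q+1)/2$, both coprime to $3$, so no semisimple element has order $3$, and the elements of order $3$ are exactly the non-identity elements of the Sylow $3$-subgroups. Each such subgroup is elementary abelian of order $q$ (the image of the group of upper unitriangular matrices), and they are all conjugate, so every subgroup of order $3$ is conjugate into a fixed one. Next I would pass between $\SL(2,q)$ and $\PSL(2,q)$: the quotient map identifies the order-$3$ elements of $\SL(2,q)$ bijectively with those of $\PSL(2,q)$ (if $u$ has order $3$ then $-u$ has order $6$, so no two distinct order-$3$ elements are identified), whence it induces a conjugacy-preserving bijection on subgroups of order $3$. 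It therefore suffices to count conjugacy classes of order-$3$ subgroups in $\SL(2,q)$.

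Working inside the Borel subgroup $B=T\ltimes U$ of $\SL(2,q)$, with $U=\{[\begin{smallmatrix}1&t\\0&1\end{smallmatrix}]\}\cong \mathbb{F}_q^+$ and $T$ the diagonal torus, I would use that conjugation by $\mathrm{diag}(a,a^{-1})$ sends the parameter $t$ to $a^2t$. Hence $T$ has exactly two orbits on the non-identity unipotent elements, according to whether $t$ is a square or a non-square in $\mathbb{F}_q^*$; these are the two classes of transvections, and since every non-identity unipotent is $\SL$-conjugate to an upper unitriangular one, this describes the global element-classes. The whole argument now hinges on a single dichotomy: whether $-1$ is a square in $\mathbb{F}_q$. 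As $\mathbb{F}_q^*$ is cyclic of order $q-1$, this holds precisely when $q\equiv 1\pmod 4$, and $q=3^n\equiv(-1)^n\pmod 4$, so $-1$ is a square if and only if $n$ is even.

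I would then finish by comparing the two non-identity elements $u,u^{-1}$ of a subgroup $\langle u\rangle$ of order $3$, noting that $u^{-1}$ carries the parameter $-t$. If $n$ is odd, then $-1$ is a non-square, so $u$ and $u^{-1}$ lie in different element-classes; thus every order-$3$ subgroup contains exactly one generator from the ``square'' class, and since that class is a single conjugacy class, all order-$3$ subgroups are conjugate, giving one class. If $n$ is even, then $-1$ is a square, so $u$ and $u^{-1}$ lie in the same element-class; the order-$3$ subgroups then split into a ``square type'' and a ``non-square type'', which cannot be fused by conjugation (conjugation preserves element-classes), while within each type all subgroups are conjugate, giving two classes. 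Transporting this back through the bijection of the first paragraph yields the claim.

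I expect the main obstacle to be bookkeeping the difference between conjugacy of \emph{elements} and conjugacy of \emph{subgroups}: the inversion $u\mapsto u^{-1}$ swaps the two generators of a subgroup, and whether this swap can be realized by conjugation is exactly what the square/non-square status of $-1$ controls. Getting this interplay right, together with the $\SL$-versus-$\PSL$ reduction, is the crux; everything else is the routine linear algebra of the torus action. Alternatively, one could invoke Dickson's classification of the subgroups of $\PSL(2,q)$ directly, as the statement suggests, but the transvection computation above is self-contained.
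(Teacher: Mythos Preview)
Your argument is correct. The paper, by contrast, does not actually give a proof: it simply asserts that the statement ``can be easily derived from the structure of subgroups of $\PSL(2,q)$'' and cites Suzuki's book, omitting the details entirely. Your route is therefore genuinely different in that it is self-contained: you work directly with the transvections, use the torus action on the unipotent radical to identify the two element-classes, and then let the square/non-square status of $-1$ (governed by $3^n\equiv(-1)^n\pmod 4$) decide whether inversion fuses them at the level of subgroups. What this buys you is an elementary argument that makes the role of the parity of $n$ completely explicit; what the paper's deferral to Dickson's classification buys is brevity. You already flag this alternative in your final sentence, and either path is legitimate. The bookkeeping you worried about---the $\SL$/$\PSL$ passage and the element-versus-subgroup distinction---is handled cleanly in your outline.
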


\begin{proposition}\label{thm:23-gen}
{\rm \cite{Ma}}
Unless  $q=9$ the group $\PSL(2,q)$ 
is generated by an element of order $2$ 
and an element of order $3$.  
\end{proposition}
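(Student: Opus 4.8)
The plan is to produce an involution and an element of order $3$ inside $\SL(2,q)$ and to control the subgroup they generate by means of traces. A group generated by an element of order $2$ and one of order $3$ is precisely a quotient of the modular group $\ZZ_2 * \ZZ_3 \cong \PSL(2,\ZZ)$, so it suffices to exhibit one good pair. Note first that the relevant elements always exist: one of the three consecutive integers $q-1,q,q+1$ is divisible by $3$, so $3 \mid |\PSL(2,q)|=q(q^2-1)/\gcd(2,q-1)$, and $|\PSL(2,q)|$ is even for $q\ge 2$. Working in $\SL(2,q)$ and passing to $\PSL(2,q)$, I would take $A=\mm{0}{-1}{1}{0}$, so that $A^2=-I$ and its image $\bar A$ is an involution with $\Tr A=0$, together with an element $B$ of determinant $1$ whose trace equals the value forcing order $3$ in $\PSL(2,q)$ (namely $\Tr B=\pm1$ when $\mathbb{F}_q$ has characteristic $\ne 3$, and $\Tr B=-1$, i.e.\ a nontrivial unipotent, in characteristic $3$). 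A short computation with $B=\mm{a}{b}{c}{d}$ gives $\Tr(AB)=b-c$, which can be prescribed freely while $\Tr A$ and $\Tr B$ are held fixed; by Fricke's classical parametrisation the conjugacy class of the pair $(A,B)$ is governed by the trace triple $(x,y,z):=(\Tr A,\Tr B,\Tr(AB))=(0,y,z)$ with $z$ a free parameter.

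The heart of the matter is then to choose $z$ so that $H:=\langle \bar A,\bar B\rangle$ is all of $\PSL(2,q)$, and for this I would invoke Dickson's classification of subgroups of $\PSL(2,q)$: a proper subgroup is cyclic, dihedral, one of $A_4,S_4,A_5$, of Borel (point-stabiliser) type, or a subfield subgroup $\PSL(2,q_0)$ or $\PGL(2,q_0)$. Each case constrains the admissible $z$. The reducible case (a common eigenvector, subsuming the cyclic, Borel and part of the dihedral possibilities) is cut out by the Fricke trace relation $x^2+y^2+z^2-xyz=4$, which for $x=0$ collapses to $z^2=4-y^2$ and hence admits at most two values of $z$; the remaining dihedral configurations force $\bar A\bar B$ to be an involution, i.e.\ $z=0$; the primitive groups $A_4,S_4,A_5$ correspond to $z$ being a root of one of finitely many fixed polynomials over the prime field; and a subfield subgroup forces the conjugation-invariant quantity $z$ to lie in a proper subfield of $\mathbb{F}_q$. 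Thus the set of ``bad'' values of $z$ consists of a bounded number of exceptional elements together with the union of the proper subfields of $\mathbb{F}_q$ (this last obstruction being vacuous when $q$ is prime).

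A counting argument then settles the generic case: the bad set is bounded by a constant plus the combined size of the proper subfields of $\mathbb{F}_q$, which is $o(q)$, so once $q$ exceeds a small explicit bound there is a $z\in\mathbb{F}_q$ that generates $\mathbb{F}_q$ over its prime field and avoids every exceptional value. For such $z$ no subgroup in Dickson's list can contain $H$, whence $H=\PSL(2,q)$ and $\bar A,\bar B$ are the desired generators. The finitely many remaining small $q$ I would treat by direct inspection, and it is exactly here that the exception appears: $\PSL(2,9)\cong A_6$ is \emph{not} generated by any involution together with any element of order $3$, a fact one reads off from the conjugacy-class and maximal-subgroup data of $A_6$, whereas every other small case does admit such a pair (including the coincidences $\PSL(2,2)\cong S_3$ and $\PSL(2,4)\cong\PSL(2,5)\cong A_5$).

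The step I expect to be the genuine obstacle is the low-degree bookkeeping rather than the asymptotics. One must check that each exceptional subgroup type really does pin $z$ down to the claimed finite set, and this is where characteristics $2$ and $3$ demand separate trace computations, since there the notions of ``involution'' and ``element of order $3$'' are realised by unipotent elements and the generic trace values degenerate. Equally delicate is the small-$q$ analysis: it must be carried out with enough care to isolate $q=9$ as the unique true failure while confirming generation in all the other small cases, so that the clean statement ``unless $q=9$'' emerges rather than a longer list of exceptions.
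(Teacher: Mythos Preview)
The paper does not supply its own proof of this proposition: it is quoted directly from Macbeath's paper \cite{Ma} and used as a black box, so there is nothing in the present paper to compare your argument against.

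That said, your outline is essentially Macbeath's original strategy. His 1967 paper classifies the trace triples $(\Tr A,\Tr B,\Tr(AB))$ in $\mathbb{F}_q$ according to whether the corresponding pair generates $\PSL(2,q)$, a subfield group, or one of the exceptional subgroups, and the $(2,3)$-generation statement drops out as a special case. Your use of Dickson's list, the Fricke reducibility condition, and the freedom in choosing $z=\Tr(AB)$ are all in the same spirit. Two points deserve a little more care than your sketch suggests. First, the assertion that ``the conjugacy class of the pair $(A,B)$ is governed by the trace triple'' is not literally true over $\mathbb{F}_q$ in all cases; what you actually need, and what Macbeath proves, is only that the \emph{isomorphism type} of the subgroup generated is constrained by the trace triple. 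Second, in the subfield case one must argue that the traces of \emph{all} words in $A,B$ lie in the subfield (not merely $z$ itself), which follows from the trace recursion $\Tr(UV)+\Tr(UV^{-1})=\Tr(U)\Tr(V)$ but is worth stating. With those caveats your plan is sound and would reproduce the cited result.
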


Finally, 
it is known that if $F$ is an algebraic closed field of characteristic $p$ 
and $A \in \SL(2,F)$ such that $A$ is not in the center $Z=Z(\SL(2,F))$, then $A^3 \in Z$ if and only if $\Tr(A)$ is either $1$ or $-1$ 
(see \cite[(6.19) in Chapter 3]{Sz}).  This implies the following statement.

\begin{proposition}\label{pro:trace}
Let $A \in \SL(2,q)$ be a matrix such that $A \ne \pm I$. Then
$A^3=\pm I$ if and only if its
trace $\Tr(A)$ is either $1$ or $-1$.
\end{proposition}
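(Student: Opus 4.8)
The plan is to deduce the statement from the algebraically closed version quoted just above from \cite[(6.19) in Chapter 3]{Sz}, by viewing $\SL(2,q)$ inside $\SL(2,F)$ for $F$ an algebraic closure of $\mathbb{F}_q$. First I would pin down the center: in $\SL(2,F)$ the central elements are exactly the scalar matrices $\lambda I$ with $\det(\lambda I)=\lambda^2=1$, so $Z(\SL(2,F))=\{\pm I\}$, and these two matrices already lie in $\SL(2,q)$. Hence for $A\in\SL(2,q)$ the hypothesis $A\ne\pm I$ says precisely $A\notin Z(\SL(2,F))$, and the conclusion $A^3=\pm I$ says precisely $A^3\in Z(\SL(2,F))$. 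Since the trace of $A$ is unchanged under the field extension, the quoted result then applies verbatim and yields the stated equivalence.

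Alternatively---and this is the route I would actually prefer, since it is self-contained---I would argue by direct computation using the Cayley--Hamilton theorem. Writing $t=\Tr(A)$ and using $\det(A)=1$, the characteristic polynomial gives $A^2=tA-I$, whence
$$A^3=tA^2-A=(t^2-1)A-tI.$$
For the reverse implication, substituting $t=-1$ gives $A^3=I$ and $t=1$ gives $A^3=-I$, so in either case $A^3=\pm I$. For the forward implication, suppose $A^3=\varepsilon I$ with $\varepsilon\in\{1,-1\}$; then $(t^2-1)A=(t+\varepsilon)I$. Since the only scalar matrices in $\SL(2,q)$ are $\pm I$ and $A\ne\pm I$ by hypothesis, the matrix $A$ is non-scalar, which forces the coefficient $t^2-1$ to vanish; thus $t^2=1$, i.e.\ $t\in\{1,-1\}$.

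The key identity in both approaches is really the same fact that the cube of $A$ reduces to an affine expression in $A$ with coefficients determined by $t$, so everything hinges on $A$ being non-scalar to extract $t^2-1=0$. The only genuine subtlety---the step I would watch most carefully---is the characteristic-$2$ case, where $1=-1$ in $\mathbb{F}_q$ so that ``trace is $1$ or $-1$'' collapses to ``trace is $1$'' and ``$A^3=\pm I$'' collapses to ``$A^3=I$''. One checks that the formula $A^3=(t^2-1)A-tI$ still gives $A^3=I$ when $t=1$, so the equivalence persists; correspondingly, in the reduction argument one notes that $Z(\SL(2,F))$ degenerates to $\{I\}$ in this case. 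Apart from this bookkeeping, I expect no real obstacle.
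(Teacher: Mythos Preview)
Your first approach is exactly what the paper does: it quotes the algebraically closed version from \cite[(6.19) in Chapter 3]{Sz} and simply asserts that it ``implies the following statement,'' leaving the identification $Z(\SL(2,F))=\{\pm I\}$ and the invariance of the trace under field extension as tacit. Your write-up of this reduction is more explicit than the paper's and entirely correct.

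Your second, Cayley--Hamilton route is genuinely different and more elementary: the paper does not carry out any such computation but relies entirely on the citation. Your argument is clean and self-contained---the identity $A^3=(t^2-1)A-tI$ does all the work, and the observation that the only scalar matrices in $\SL(2,q)$ are $\pm I$ is exactly what lets you conclude $t^2-1=0$ in the forward direction. This buys independence from the reference and makes the statement accessible without any structure theory. The only thing the citation-based route buys is brevity (one sentence instead of a short paragraph). Your handling of characteristic~$2$ is also correct and, again, more careful than the paper, which does not comment on it.
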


\begin{proposition}
\label{cor:pgl-psl}
 Let $q=p^k$ for a prime $p$, $p \ne 3$.
Further let $G=\mathrm{PSL}(2,q)$  be
considered in its transitive action on the cosets of a subgroup
isomorphic to $\ZZ_3$. Then a non-canonical  basic intersecting set in $G$ 
contains no point stabilizer as a proper
subset.
\end{proposition}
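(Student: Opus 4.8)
The plan is to translate the statement into the language of the complement of the derangement graph and then reduce everything to a one-line trace computation in $\SL(2,q)$.

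First I would record the reformulation. By the discussion preceding Lemma~\ref{lem:arc-transitivity for small stabilizer}, a basic intersecting set $\mathcal{F}$ (one containing $1$) is precisely a clique of $\overline{\Gamma_G}$ through the identity, i.e.\ a set with $1 \in \mathcal{F}$ such that $g^{-1}h$ fixes a point for all $g,h \in \mathcal{F}$. Since every point stabilizer has order $3$, an element fixes a point exactly when it is the identity or has order $3$, and a basic canonical set is exactly a point stabilizer $P = G_v = \{1,x,x^2\}$ with $x$ of order $3$. Hence the proposition is equivalent to the implication: if $\mathcal{F}$ is basic intersecting and $P \subseteq \mathcal{F}$ for some point stabilizer $P = \langle x \rangle$, then $\mathcal{F} = P$. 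I would prove this by contradiction, fixing some $f \in \mathcal{F}\setminus P$. The pairs inside $P$ impose no new constraint, so the only adjacency conditions come from $\{1,f\},\{x,f\},\{x^2,f\}$; as the products arising from the reversed pairs are inverses of these, they reduce to requiring that $f$, $f^{-1}x$ and $f^{-1}x^2$ each fix a point. None of these three is the identity, for that would force $f \in \{1,x,x^2\}=P$; hence each of $f$, $f^{-1}x$, $f^{-1}x^2$ has order exactly $3$.

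Next I would lift to $\SL(2,q)$. Choose a preimage $X$ of $x$ with $\Tr(X)=1$ (this is achievable, replacing $X$ by $-X$ in odd characteristic), and a preimage $F$ of $f$. By Proposition~\ref{pro:trace} an element of order $3$ lifts to a matrix of trace $\pm 1$, so $\Tr(F)=\pm 1$, $\Tr(F^{-1}X)=\pm 1$ and $\Tr(F^{-1}X^2)=\pm 1$. Cayley--Hamilton gives $X^2 = \Tr(X)X - I = X - I$, and in $\SL(2,q)$ one has $\Tr(F^{-1})=\Tr(F)$, whence
\[
\Tr(F^{-1}X^2) = \Tr(F^{-1}X) - \Tr(F^{-1}) = \Tr(F^{-1}X) - \Tr(F).
\]
Writing $t = \Tr(F^{-1}X) \in \{1,-1\}$ and $s = \Tr(F) \in \{1,-1\}$, the left-hand side equals $t-s \in \{0,2,-2\}$, while it must simultaneously lie in $\{1,-1\}$. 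The final step is then the arithmetic contradiction $\{0,2,-2\} \cap \{1,-1\} = \emptyset$ in $\mathbb{F}_q$, which fails only when $3=0$, i.e.\ when $p=3$. Since $p \ne 3$ by hypothesis, no such $f$ exists, forcing $\mathcal{F}=P$ and proving the claim.

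I expect the main obstacle to be bookkeeping rather than conceptual: one must verify carefully that the three singled-out conditions are genuinely the only constraints, and must control the sign ambiguities of the lifts so that the degenerate cases ``$F^{-1}X^{i}=\pm I$'' are correctly matched with ``$f\in P$'' and excluded before invoking Proposition~\ref{pro:trace}. It is worth noting that the hypothesis $p\ne3$ enters in a completely transparent way, and that its failure in characteristic $3$ is consistent with the large intersecting sets constructed for $q=3^{n}$ in the later sections.
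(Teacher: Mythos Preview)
Your proof is correct and follows essentially the same route as the paper: both lift to $\SL(2,q)$, use Proposition~\ref{pro:trace} to reduce to traces in $\{\pm 1\}$, and exploit the Cayley--Hamilton/minimal-polynomial relation $X^2=\pm X-I$ to derive an identity among three such traces that is impossible when $p\ne 3$. The only cosmetic difference is that the paper writes this as ``the sum of three $\pm 1$'s is $0$'' (multiplying $I\pm B+B^2=0$ by $C$), whereas you rearrange it as ``one $\pm 1$ equals the difference of the other two''.
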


\begin{proof}
Let us assume that there is an intersecting set $\mathcal{F}$ 
in $G$ containing $I$, $B$, $B^2$ and $C$, 
where $B$ and $C$ are
two elements of order $3$ in $G$ such that $C\not\in\{B,B^2\}$. 

Consider the above two elements $B$ and $C$ 
 in $G$ as `matrices', that is elements in $\mathrm{SL}(2,q)$.
Then the minimal polynomial $m_B(x)$ of $B$ is either
$x^2+x+1$ or $x^2-x+1$, depending on whether $B^3=I$
or $B^3=-I$ in $\mathrm{SL}(2,q)$.
If $m_B(x)=x^2+x+1$ consider the product $C(I+B+B^2)$.
We have
$$
\mathbf{0}=C \cdot \mathbf{0} =C(I+B+B^2)= CI + CB + CB^2.
$$
Consequently, the sum of traces $Tr(C I)$, $Tr(CB)$ and
$Tr(CB^2)$ equals zero. However, by our assumption each of
$C$, $CB$ and $CB^2$ is an element of order $3$, and therefore
its trace (as an `element' of $\mathrm{SL}(2,q)$) is either $1$ or $-1$.
Therefore, since $p\ne 3$, 
the sum of these three traces cannot be equal to zero.
The argument in case when $m_B(x)=x^2-x+1$ is analogous.
We just have to consider the expression $C(I-B+B^2)$, 
completing the proof of Proposition~\ref{cor:pgl-psl}.
\end{proof}



\section{Groups with point stabilizers of order $2$}
\label{sec:2}
\noindent

\begin{lemma}\label{lem:stabilizer Z_2 - comuting elements}
Let $G$ be a transitive group with point stabilizers of order $2$, and let 
$\mathcal{F}$ be a basic  intersecting set. Then $\la \mathcal{F} \ra$ is an elementary abelian $2$-group.
\end{lemma}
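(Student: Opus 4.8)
The plan is to exploit the fact that, because $\mathcal{F}$ is basic, every element of $\mathcal{F}$ must fix a point, and then to use the order-$2$ hypothesis to force all these elements to be involutions which, moreover, pairwise commute.

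First I would observe that since $1 \in \mathcal{F}$, the intersecting condition applied to the pair $1, f$ (for any $f \in \mathcal{F}$) yields a point $v$ with $1(v) = f(v)$, that is $f(v)=v$, so $f \in G_v$. As $|G_v| = 2$, each $f \in \mathcal{F}$ is either the identity or an involution; in particular $f^{-1} = f$.

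Next, for arbitrary $g, h \in \mathcal{F}$ the intersecting condition gives a point $w$ with $g(w) = h(w)$, equivalently $h^{-1}g \in G_w$. Using $g^{-1} = g$ and $h^{-1} = h$ from the previous step, this says that $hg$ lies in a point stabilizer of order $2$, so $hg$ is either the identity or an involution. If $hg = 1$ then $g = h$; otherwise $(hg)^2 = 1$, and inverting gives $gh = (hg)^{-1} = hg$. In either case $g$ and $h$ commute.

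Therefore $\mathcal{F}$ consists of pairwise commuting involutions (together with the identity), whence $\la \mathcal{F} \ra$ is abelian and generated by elements of order dividing $2$, i.e.\ an elementary abelian $2$-group. I do not anticipate a genuine obstacle here; the only point requiring a little care is the bookkeeping that the relation $g(w)=h(w)$ translates into $h^{-1}g$ (rather than $gh$) fixing a point, after which the involution identities $g^{-1}=g$ and $h^{-1}=h$ collapse the computation immediately.
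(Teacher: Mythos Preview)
Your proof is correct and follows essentially the same route as the paper: show each non-identity element of $\mathcal{F}$ is an involution (by intersecting with $1$), then deduce that $h^{-1}g=hg$ is an involution (by the intersecting condition on $g,h$), forcing $gh=hg$. The only cosmetic difference is that the paper phrases the computation with $x^{-1}y=xy$ rather than $h^{-1}g=hg$.
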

\begin{proof}
Let $x, y \in \mathcal{F}$ be distinct and non-identity. 
Since $x$ and $y$ are intersecting with the identity, it follows that each fixes a point, 
hence has order $2$. Moreover, since $x$ and $y$ are intersecting, it follows that $x^{-1}y=xy$ is also of order $2$, which implies that $x$ and $y$ commute.
\end{proof}

In the next proposition we provide some sufficient condition for 
the EKR property of a transitive group with point stabilizers of order $2$.

\begin{proposition}
A transitive group $G$ with point stabilizers of order $2$ has the EKR property if 
\begin{enumerate}[(i)]
\itemsep=0pt
\item $G$ has a cyclic Sylow $2$-subgroup, or 
\item $G$ is a nilpotent group. 
\end{enumerate}
\end{proposition}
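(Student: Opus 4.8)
The plan is to show in both cases that a maximum intersecting set has cardinality equal to $|G_v|=2$, equivalently that $\rho(G)=1$. Since every intersecting set translates to a basic one of the same size, it suffices to bound $|\mathcal{F}|$ for a basic intersecting set $\mathcal{F}$. By Lemma~\ref{lem:stabilizer Z_2 - comuting elements}, $\langle \mathcal{F}\rangle$ is an elementary abelian $2$-group, so the two hypotheses can be exploited in two different ways: in case (i) by directly controlling the elementary abelian $2$-subgroups of $G$, and in case (ii) by a reduction through a normal semiregular subgroup.

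For part (i), I would argue purely group-theoretically. The subgroup $\langle\mathcal{F}\rangle$ is a $2$-group, hence is contained in some Sylow $2$-subgroup $P$ of $G$. By hypothesis $P$ is cyclic, so $\langle\mathcal{F}\rangle$, being a subgroup of a cyclic group, is itself cyclic; but an elementary abelian $2$-group that is cyclic has order at most $2$. Therefore $|\mathcal{F}|\le|\langle\mathcal{F}\rangle|\le 2$, and since canonical intersecting sets already have size $2$, the maximum is exactly $2$, giving the EKR-property.

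For part (ii), I would use the nilpotent decomposition $G=P\times Q$, where $P$ is the Sylow $2$-subgroup and $Q$ is the Hall $2'$-subgroup (the direct product of the Sylow subgroups of odd order). Since $|G_v|=2$ and $|Q|$ is odd, we have $Q\cap G_v=1$ for every $v$, so $Q$ acts semiregularly on $V$. As a direct factor, $Q$ is normal in $G$, so its orbits form a $G$-invariant partition $\mathcal{B}$. Applying Proposition~\ref{prop:semi-inv} yields $\rho(G)\le\rho(\bar{G})$, where $\bar{G}$ is the image of $G$ in its action on $\mathcal{B}$. The decisive observation is that $\bar{G}$ has degree $|\mathcal{B}|=|V|/|Q|=|P|/2$, a power of $2$; hence by Proposition~\ref{thm:deg-p} its intersection density equals $1$, and so $\rho(G)=1$.

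I expect the only real bookkeeping to arise in part (ii): one must verify that $Q$ is genuinely semiregular (using the coprimality of $|Q|$ and $|G_v|$) and that the quotient action lands in a group of prime power degree. Part (i) is essentially immediate once the elementary abelian structure of Lemma~\ref{lem:stabilizer Z_2 - comuting elements} is combined with the cyclicity of the Sylow $2$-subgroup. For the degenerate subcase $|P|=2$ of part (ii) there is a uniform shortcut: then $Q$ is regular, and $\rho(G)=1$ follows directly from Proposition~\ref{pro:regular}.
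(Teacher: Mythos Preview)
Your argument is correct. Part~(i) is identical to the paper's. For part~(ii) the paper proceeds by induction on the number of prime divisors of $|G|$: at each step it quotients by a single normal Sylow $p$-subgroup for one odd prime $p$, observes that the resulting $\bar{G}$ is again nilpotent with point stabilizers of order~$2$, and invokes the inductive hypothesis, the base case being a $2$-group handled by Proposition~\ref{thm:deg-p}. You instead quotient by the full Hall $2'$-subgroup $Q$ in a single step and apply Proposition~\ref{thm:deg-p} directly to the resulting action of $2$-power degree. Both routes rest on the same two ingredients (Propositions~\ref{prop:semi-inv} and~\ref{thm:deg-p}); yours simply collapses the induction, which is a mild streamlining and also sidesteps having to verify at each stage that the quotient still has stabilizers of order~$2$.
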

\begin{proof}
Part (i) is a direct consequence of 
Lemma~\ref{lem:stabilizer Z_2 - comuting elements}. 

To show part (ii) we proceed by induction on the 
number of prime divisors of $|G|$. 
If $G$ is a $2$-group, then the statement follows from 
Proposition~\ref{thm:deg-p}. 
Assume that $|G|$ is divisible by an odd prime $p$, and let 
$P$ be a Sylow $p$-subgroup of $G$. Then $P \lhd G$ because $G$ is nilpotent. 
As $P$ is also semiregular, Proposition~\ref{prop:semi-inv} 
can be applied with $\mathcal{B}$ being the set of $P$-orbits. 
It follows that $\rho(G) \le \rho(\bar{G})$, where 
$\bar{G}$ is the image of
$G$ under its action on $\mathcal{B}$. 
Moreover, 
$\bar{G}$ is transitive with point stabilizers of 
order $2$, it is also nilpotent with the smaller number of prime divisors
than $|G|$. Hence the induction hypothesis yields 
$\rho(\bar{G})=1$, and (ii) follows. 
\end{proof}

It can be deduced  from Lemma~\ref{lem:stabilizer Z_2 - comuting elements} 
that a maximal intersecting set 
of transitive group with point stabilizer of order $2$
cannot have size $3$. 
Namely, if $\mathcal{F}=\{1,x,y\}$ is an intersection set, then 
it is easy to see that so is $\{1,x,y,xy\}$.
Nonetheless, the example below shows that maximum intersecting sets in transitive groups  with stabilizers of order $2$ can have any size $d \geq 2$ and $d \ne 3$.

\begin{example}
{\rm  
Let $G=E \rtimes Q$ be the semidirect product of the group 
$E=\la e_1,\ldots,e_n \ra \cong \ZZ_2^n$ with the group $Q=\la a, b\ra$, where 
the action of $a$ and $b$ by conjugation on $E$ is defined by 
$$
e_i^a=\begin{cases} e_{i+1} & \text{if}~1 \le i \le n-1 \\ 
e_1 & \text{if }~i=n. 
\end{cases},\quad
e_i^b=\begin{cases} e_ie_n & \text{if}~1 \le i \le n-1 \\ 
e_n & \text{if}~i=n.
\end{cases}
$$
Consider the permutation group  induced by the action of $G$ on 
the cosets of $\la e_1 \ra$.

It is easy to check that the conjugacy class of $G$
containing $e_1$ is equal to the set $C:=\{e_i, e_ie_j : 1 \le i,j \le n\}$. 
This shows that the set $\{1,e_1,\ldots,e_n\}$ is an intersecting set. 
We prove that, if $\mathcal{F}$ is an arbitrary intersecting set, then 
$|\mathcal{F}| \le n+1$. 
We may assume without loss of generality 
that $1 \in \mathcal{F}$, hence $\mathcal{F}\setminus\{1\} \subset C$. 
Write 
$$
\mathcal{F}_1=\mathcal{F} \cap \{e_i : 1 \le i \le n\}~\text{and}~
\mathcal{F}_2=\mathcal{F} \cap \{e_ie_j : 1 \le i,j \le n\}.
$$ 
Note that, from the set theoretic point of view
the sets $\{i,j\}$, $e_ie_j \in \mathcal{F}$, form a classical intersecting family 
of $\{1,\ldots,n\}$, hence $|\mathcal{F}_2| \le n-1$. On the other
hand, it is easy to see that 
$|\mathcal{F}_1| \le 1$ if $|\mathcal{F}_2| \ge 2$. All these yield that 
$|\mathcal{F}| \le n+1$.
}\end{example}

We remark that the group $G$ above
admits no connected orbitals. 
If that was not the case then 
$G=\la e_1, g \ra$ would hold for some element $g \in G$ due to 
Proposition~\ref{prop:orbital}(ii). But then $G=\la E, g \ra=E \rtimes \la g \ra$, so $Q$ 
is a cyclic group, contradicting the fact that $ab \ne ba$.

We conclude this subsection with a connection to character theory.
In fact, the EKR property of a given transitive group $G$ with stabilizers of order $2$ 
can be read off the character table of $G$. 

\begin{proposition}
Let $G$ be a transitive group with point stabiliers of order $2$.  
Then $G$ has the EKR property if and only if 
$$
\sum_{\chi}\frac{ \chi(g)^3}{\chi(1)}=0.
$$
where $\chi$ runs though  the set of all irreducible characters of $G$, and 
$g$ is any non-identity element of $G$ fixing a point. 
\end{proposition}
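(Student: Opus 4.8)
The plan is to translate the EKR property into the nonexistence of a certain multiplicative triple inside the conjugacy class of $g$, and then to detect such triples via the class algebra (structure) constants of $G$, which are precisely what the displayed character sum computes. First I would record the shape of $\S$. Since $G$ is transitive its point stabilizers are conjugate, and since each has order $2$ it consists of the identity together with a single involution; hence $\S=C$, the conjugacy class of the involution $g$ (and $C=C^{-1}$ automatically, as $g=g^{-1}$). As already noted, an intersecting set is a clique of $\overline{\Gamma_G}=\Cay(G,\S)$, so $G$ has the EKR property exactly when $\omega(\overline{\Gamma_G})=|G_v|=2$. Because the canonical cosets already furnish cliques of size $2$, this is equivalent to the absence of any basic intersecting set $\{1,x,y\}$ of size $3$. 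By Lemma~\ref{lem:stabilizer Z_2 - comuting elements} such a set forces $x,y\in C$ to be commuting involutions with $xy$ again an involution lying in $\S=C$; conversely any two such elements produce a size-$3$ intersecting set. Thus $G$ fails to have the EKR property if and only if there exist $x,y\in C$ with $xy\in C$, that is, if and only if there is a triple $(x,y,z)\in C\times C\times C$ with $xy=z$.

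Next I would count such triples by character theory. Fixing any $z=g\in C$, the number of pairs $(x,y)\in C\times C$ with $xy=g$ is the class algebra constant
$$
a=\frac{|C|^2}{|G|}\sum_{\chi}\frac{\chi(g)\,\chi(g)\,\overline{\chi(g)}}{\chi(1)},
$$
the sum running over all irreducible characters $\chi$ of $G$. Here I would use that $g$ is an involution, so $g=g^{-1}$ and every character value satisfies $\chi(g)=\chi(g^{-1})=\overline{\chi(g)}$, hence is real; consequently the summand simplifies to $\chi(g)^3/\chi(1)$ and $a=(|C|^2/|G|)\sum_{\chi}\chi(g)^3/\chi(1)$. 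Since $a$ is a nonnegative integer and $|C|^2/|G|>0$, we obtain $a=0$ if and only if the displayed sum vanishes, while $a=0$ holds exactly when no triple $(x,y,z)$ as above exists. Combining this with the reduction of the first paragraph yields that $G$ has the EKR property if and only if $\sum_{\chi}\chi(g)^3/\chi(1)=0$.

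The main difficulty will be the two genuinely non-formal points: (a) verifying that the only way EKR can fail for order-$2$ stabilizers is through a multiplicative triple inside the \emph{single} class $C$, which rests on the involution/commuting structure of Lemma~\ref{lem:stabilizer Z_2 - comuting elements} together with the fact that $\S$ is one conjugacy class; and (b) correctly invoking the structure-constant formula and justifying $\overline{\chi(g)}=\chi(g)$, so that the third factor becomes $\chi(g)$ rather than its conjugate and the cube $\chi(g)^3$ appears. Everything else is bookkeeping. I would also remark that, because each $\chi$ is a class function, the count $a$ is independent of the chosen representative $g\in C$, so the criterion is well posed for \emph{any} non-identity point-fixing element of $G$, as stated.
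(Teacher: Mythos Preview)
Your proposal is correct and follows essentially the same route as the paper: reduce the EKR property to the vanishing of the class-algebra constant $a_{111}$ for the single conjugacy class $C$ of point-fixing involutions, and then read off that constant from the standard character formula. Your write-up is in fact more explicit than the paper's, which simply asserts that ``having the EKR property is the same as requiring $a_{111}=0$'' and invokes the formula; you supply the justification that $\S=C$ is a single class, that a size-$3$ basic intersecting set exists iff some product $xy$ with $x,y\in C$ lands back in $C$, and that $\overline{\chi(g)}=\chi(g)$ because $g$ is an involution.
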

\begin{proof}
Let $C_1,\ldots,C_r$ be the conjugacy classes of $G$ and let 
$a_{ijk}$ be the class algebra constants of $G$, that is, for $1 \le i,j,k \le r$,
$$
\underline{C}_i\, \underline{C}_j=\sum_{k=1}^{r} a_{ijk} \underline{C}_k
$$
holds in the group algebra $\CC G$ (see \cite[Chapter~30]{JL}). For a subset $X \subseteq G$, 
$\underline{X}$ stands for the element $\sum_{g\in G}a_g g$ with 
$a_g=1$ if $g \in X$, and $a_g=0$ otherwise. Then we have 
\begin{equation}\label{eq:formula}
a_{ijk}=\frac{|G|}{|C_G(g_i)| |C_G(g_j)|}\sum_{\chi }
\frac{ \chi(g_i)\chi(g_j)\overline{\chi(g_k)}}{\chi(1)},
\end{equation}
where $g_i \in C_i, g_j\in C_j$ and $g_k \in C_k$, and 
$\chi$ runs through the set of all irreducible characters of $G$ (see  
\cite[Theorem~30.4]{JL}). 
Now, take $g_1$ to be $g$. It is clear that having 
the EKR property is the same as requiring
$a_{111}=0$, so the proposition follows directly from \eqref{eq:formula}.
\end{proof}


\section{Solvable groups with point stabilizers of odd prime order}
\label{sec:p}
\noindent

We start by observing that solvable transitive permutation groups
with point stabilizers of prime order  and admitting 
a connected self-paired orbital have the EKR property. 
This is a consequence of the following more general result
formulated in a graph-theoretic language.

\begin{proposition}\label{pro:pval}
Let $p$ be a  prime, 
$\Gamma$ be  a connected $p$-valent arc-transitive graph, 
and let $G\le\Aut(\Gamma)$ be
a solvable group acting transitively on arcs of $\Gamma$.  
Then $G$ has the EKR property. 
\end{proposition}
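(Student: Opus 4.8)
The plan is to prove the equivalent statement $\rho(G)=1$, using that for a transitive group the EKR property is the same as $\rho(G)=1$ (see the discussion in Section~\ref{sec:intro}). Throughout, $G$ acts on $V=V(\Gamma)$; arc-transitivity makes this action transitive, with point stabiliser $G_v$ acting transitively on the $p$ neighbours of $v$. I will argue by induction on $|V|$, and I first dispose of the case $p=2$ separately, since the quotient tool (Proposition~\ref{prop:quo}) is only available for odd $p$. For $p=2$ the graph $\Gamma$ is a connected $2$-valent graph, hence a cycle $C_n$, and $G\le\Aut(C_n)=D_n$ with $|G_v|=2$. In a basic intersecting set $\mathcal{F}$ every non-identity element fixes a vertex and is therefore a reflection; but the product of two distinct vertex-fixing reflections is a non-trivial rotation, which is fixed-point-free, so no two distinct such elements intersect. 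Hence $|\mathcal{F}|\le 2=|G_v|$ and the EKR property holds.

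Now assume $p$ is odd and that the statement holds for all connected $p$-valent arc-transitive graphs on fewer vertices. Since $G$ is solvable and non-trivial, I choose a minimal normal subgroup $N\lhd G$; solvability forces $N$ to be elementary abelian. The $N$-orbits form a $G$-invariant partition $\mathcal{B}$ of $V$, and because $N$ is abelian it acts regularly on each of its orbits, so $N$ is semiregular with exactly $k=|V|/|N|$ orbits. I split the argument according to the value of $k$.

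If $k=1$, then $N$ is a regular subgroup and Proposition~\ref{pro:regular} gives $\rho(G)=1$. If $k=2$, then $\bar{G}$, the image of $G$ on $\mathcal{B}$, is transitive on two points, hence regular of order $2$, so $\rho(\bar{G})=1$; since $N$ is semiregular with orbit partition $\mathcal{B}$, Proposition~\ref{prop:semi-inv} yields $\rho(G)\le\rho(\bar{G})=1$. Finally, if $k\ge 3$, then Proposition~\ref{prop:quo}, applied to the normal semiregular subgroup $N$ of the arc-transitive group $G$, shows that the quotient $\Gamma_N$ is again a connected $p$-valent $\bar{G}$-arc-transitive graph; as $\bar{G}$ is a quotient of $G$ it is solvable, and $\Gamma_N$ has $k<|V|$ vertices, so the induction hypothesis gives $\rho(\bar{G})=1$. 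Applying Proposition~\ref{prop:semi-inv} once more, $\rho(G)\le\rho(\bar{G})=1$. In every case $\rho(G)=1$, which is the EKR property.

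The step I expect to require the most care is the passage to the quotient when $k\ge 3$: one must be sure that $\Gamma_N$ is a genuine connected $p$-valent graph (no collapse of valency and no multiple edges) and that $\bar{G}$ still acts arc-transitively, so that the inductive hypothesis is truly applicable. This is exactly what the hypothesis of at least three orbits in Proposition~\ref{prop:quo} guarantees, and it is the reason the two exceptional small cases $k=1$ and $k=2$ must be treated by hand. A minor point worth verifying is that a minimal normal subgroup of the solvable group $G$ is indeed semiregular, which follows because an abelian group acting transitively on each of its orbits acts regularly there.
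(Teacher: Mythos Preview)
Your overall architecture matches the paper's: treat $p=2$ separately, then for odd $p$ pick a minimal normal subgroup $N\lhd G$, which is elementary abelian, and split according to the number $k$ of $N$-orbits. Your $p=2$ argument and your treatment of $k=1$ and $k\ge 3$ (granting semiregularity of $N$) are fine.

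The gap is your blanket claim that $N$ is semiregular. The sentence ``an abelian group acting transitively on each of its orbits acts regularly there'' is not correct: what is true is that an abelian \emph{faithful} transitive permutation group is regular, but $N$ need not act faithfully on a single $N$-orbit --- only on all of $V$. Concretely, take $\Gamma=K_{p,p}$ and $N\cong\ZZ_p^2$ with one $\ZZ_p$ factor permuting each side regularly and fixing the other side pointwise; then $N$ is a minimal normal subgroup of an arc-transitive solvable $G\le\Aut(K_{p,p})$, has two orbits, and is \emph{not} semiregular. In that situation Proposition~\ref{prop:semi-inv} does not apply, so your $k=2$ case breaks down. This is exactly the exceptional case the paper isolates: if $N$ is not semiregular one shows (using that $N_v\lhd G_v$ and $G_v$ is transitive on the $p$ neighbours, so $N_v$ is transitive on them, forcing all edges between just two $N$-orbits by connectedness) that necessarily $k=2$ and $\Gamma\cong K_{p,p}$, and then one finishes with Lemma~\ref{lem:bloki-2}. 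Incidentally, this same argument is what really justifies semiregularity when $k\ge 3$, not abelianness alone; so your $k\ge 3$ step is valid, but for a different reason than you give.
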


\begin{proof}
Assume first that $p=2$. Then $\Gamma$ is a cycle, and so
$G $ contains a regular subgroup, and thus Proposition~\ref{pro:regular}
implies that $G$ has the EKR property.

Assume from now on that $p$ is an odd prime.
Assume on the contrary that  $\rho(G) >1$
with  $\Gamma$ being the smallest counterexample. In other words, 
if $\tilde{\Gamma}$ is a  connected $p$-valent $\tilde{G}$-arc-transitive graph for 
some solvable group $\tilde{G}$ and $|V(\tilde{\Gamma})| < |V(\Gamma)|$, 
then $\rho(\tilde{G})=1$. 

Let $N$ be a minimal normal subgroup of $G$. Then $N  \cong \ZZ_r^l$ for 
some prime $r$ and $l \ge 1$. If $N$ is transitive, then it is regular, but this 
contradicts the fact that $\rho(G) > 1$ in view of 
Proposition~\ref{prop:semi-inv}. 

If $N$ has more than $2$ orbits, by Proposition~\ref{prop:quo}, 
$\Gamma_N$ is a connected $p$-valent $\bar{G}$-arc-transitive graph with 
$\bar{G} \cong G/N$, where $\bar{G}$ is the image 
of  $G$ under its action on the orbits of $N$. 
The group $\bar{G}$ is solvable, hence by the minimality of 
$\Gamma$, $\rho(\bar{G})=1$. On the other  hand, 
by Proposition~\ref{prop:semi-inv}, $1 < \rho(G) \le \rho(\bar{G})$, 
a contradiction. 

It remains to consider the case when $N$ has two orbits. 
If $N$ is semiregular, then the image of the action of $G$ on the two $N$-orbits 
is $S_2$, hence by Proposition~\ref{prop:semi-inv}, $1 < \rho(G) \le \rho(S_2)=1$, again
a contradiction. If $N$ is not semiregular, then it is easy to show that $r=p$, $N \cong \ZZ_p^2$ and $\Gamma=K_{p,p}$. Clearly the bipartition sets of 
$\Gamma$ form a block system for $G$. 
It follows then from Lemma~\ref{lem:bloki-2} that $\rho(G)=1$, 
a contradition, completing the proof of Proposition~\ref{pro:pval}.  
\end{proof}

In the example below we show that the above 
proposition does not extend to 
solvable transitive permutation groups 
with the point stabilizer of order $p$ but without connected
self-paired orbitals. 
These groups act on
edge-transitive graphs,   first 
studied by Praeger and Xu~\cite{PX}.

\begin{example}{\rm
Let $G=\mathrm{AGL}(1,q)$, where $q=p^e$, $p$ an odd prime and $e >1$. 
Then $G$ consists of the linear transformations $x \mapsto ax+b$, 
where $a \in \mathbb{F}_q^*$ and $b \in \mathbb{F}_q$.  
Fix non-zero elements  $a, b \in \mathbb{F}_q$ 
such that $a$ is a primitive element, and let 
$\sigma : x \mapsto x+b$ and $\tau : x \mapsto ax$. 
Note that $\la \sigma,\tau \ra=G$.
Then let
$\Gamma$ be the double coset graph
$\mathrm{Cos}(G,H,H\{\tau,\tau^{-1}\}H)$,
where $H=\la \sigma \ra$. 
It follows from basic properties of 
double coset graphs (see, 
for example, \cite[Lemma~2.1]{LLM}) 
that $\Gamma$ is a connected $G$-edge-transitive graph. 
It is easy to see that $H\tau H \ne H\tau^{-1}H$
(the corresponding orbitals are non-self-paired), hence, by 
\cite[Lemma~2.4]{LLM}, $\Gamma$ has valency $2|H|/|H \cap H^\tau|=2p$. 
The conjugacy class of $G$ containing 
$\sigma$ consists of all transformations $x \mapsto x+u$, $u \ne 0$. 
Consequently, these transformations together with the identity
form an intersecting set, 
and so $\rho(G) > 1$. 
Note also that a self-paired orbital of the action of
$G$ on $H=\la \sigma \ra$ is necessarily disconnected. 
}\end{example}


\section{Intersecting sets in groups with point stabilizers of order $3$}
\label{sec:3}
\noindent

In view of Proposition~\ref{pro:pval}, in order to construct 
transitive permutation groups having point stabilizers of prime order $p$
admitting a connected self-paired orbital
and without EKR-property,  nonsolvable groups 
need to be brought  into  consideration. 
Hereafter, we restrict ourselves to  $p=3$.
The lemma below gives a lower bound on the cardinality
of a maximal intersecting set containing a  point stabilizer
 in a transitive group
with point stabilizer of order $3$. 

\begin{lemma}\label{lem:ademir}
Let $G$ be a transitive permutation group. Let $H,K\leq G$ such that $H\cup K$ is an intersecting set. Then $HK$ is also an intersecting set.
\end{lemma}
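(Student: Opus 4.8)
The plan is to translate the intersecting condition into the language of fixed points and then exploit its invariance under conjugation. Recall that a subset $\mathcal{F} \subseteq G$ is intersecting precisely when $a^{-1}b$ fixes some point of $V$ for every $a,b \in \mathcal{F}$; equivalently, every element of $\mathcal{F}^{-1}\mathcal{F}$ lies in the union $\bigcup_{v \in V} G_v$ of point stabilizers. The first observation I would record is that this union is closed under conjugation: if $g(v)=v$ then $(x^{-1}gx)(x^{-1}(v))=x^{-1}(v)$, so a conjugate of a point-fixing element again fixes a point. This is where transitivity enters implicitly, through the fact that the point stabilizers form a single conjugacy-closed family.

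Next I would extract the usable consequences of the hypothesis that $H \cup K$ is intersecting. Since $H$ and $K$ are subgroups (hence contain the identity and are closed under inversion), choosing the two elements of $H \cup K$ to be $k^{-1} \in K$ and $h \in H$ shows that $kh$ fixes a point; as $k,h$ range over $K,H$, this says that every element of the product set $KH$ fixes a point. Symmetrically, every element of $HK$, and of $H$ and $K$ individually, fixes a point, although only $KH$ will be needed below.

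The main step is then to verify the intersecting condition for $HK$ directly. Taking arbitrary $a=h_1k_1$ and $b=h_2k_2$ in $HK$, I would compute
$$
a^{-1}b = k_1^{-1}(h_1^{-1}h_2)k_2 = k_1^{-1}\,h\,k_2, \qquad h:=h_1^{-1}h_2 \in H.
$$
The key insight is that conjugating by $k_2$ absorbs one $K$-factor into the other:
$$
k_2\,(a^{-1}b)\,k_2^{-1} = (k_2 k_1^{-1})\,h \in KH,
$$
which fixes a point by the previous paragraph. Since fixing a point is conjugation-invariant, $a^{-1}b$ itself fixes a point, and therefore $HK$ is intersecting.

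The only real obstacle is spotting this conjugation trick. A naive attempt to show that the full triple product $(HK)^{-1}(HK)=KHK$ consists of point-fixing elements looks stronger than what the hypothesis supplies, since the hypothesis only controls the two-factor products $HK$ and $KH$. Conjugating each element of $KHK$ so as to merge the two outer $K$-factors reduces the problem to $KH$, which the hypothesis does control. Once this reduction is seen, the remaining verification is routine and uses only elementary group manipulation.
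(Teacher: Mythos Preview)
Your proof is correct and follows essentially the same approach as the paper: both first observe that every element of $KH$ fixes a point (since $H\cup K$ is intersecting and $H,K$ are subgroups), and then reduce the three-factor expression $a^{-1}b\in K H K$ to an element of $KH$. The only cosmetic difference is that you do the reduction in one step by conjugating with $k_2$ and invoking conjugation-invariance of the set of point-fixing elements, whereas the paper reaches the same conclusion via a chain of automorphisms of $\overline{\Gamma_G}$ (left multiplication, then the inversion map of Lemma~\ref{lem:inversion map is automorphism}, then left multiplication again); your packaging is arguably a bit more direct, but the underlying idea is identical.
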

\begin{proof}
Since $H\cup K$ is an intersecting set, it follows that $hk$ and $kh$ fixes a point for each $h\in H$ and $k\in K$. Hence $1$ is adjacent with every element of $HK$ in $\overline{\Gamma_{G}}$. Let $hk$, $h_1k_1$ be arbitrary in $HK$. By $\sim$ we denote the adjacency relation in $\overline{\Gamma_{G}}$. Then we have 
\begin{align*}
hk\sim h_1k_1  & \Leftrightarrow k \sim h^{-1}h_1k_1 \textrm{ (Using left multiplicaiton by } h^{-1} \textrm{which is an automorphism}) \\
& \Leftrightarrow k^{-1} \sim k_1^{-1}h_1^{-1}h \textrm{ (Using the inversion automorphism})\\
& \Leftrightarrow 1 \sim kk_1^{-1}h_1^{-1}h \textrm{ (Using left multiplicaiton by } k)\\
& \Leftrightarrow 1 \sim k'h'.
\end{align*}
Observe that $k'h'$ is adjacent with $1$ in  $\overline{\Gamma_{G}}$ for every $h'\in H$ and every $k'\in K$ since $H\cup K$ is an intersecting set in $G$. This shows that $HK$ is a clique in  $\overline{\Gamma_{G}}$.
\end{proof}

\begin{proposition}
\label{lem:2}
Let $G$ be a transitive group acting on a set $V$,
with point stabilizer $H=\la x \ra \cong \ZZ_3$
of order $3$, and admitting
a maximal  intersecting set $\mathcal{F}$  
containing $H$ as a proper subset. Then 
\begin{enumerate}[(i)]
\itemsep=0pt
\item $|\mathcal{F}|\ge 9$.
\item For every $y \in \mathcal{F} \setminus H$, 
the group $\la x, y \ra $ is  isomorphic either to  $\ZZ_3^2$ or 
to the unique non-abelian group of order $27$ with exponent $3$. 
\end{enumerate}
\end{proposition}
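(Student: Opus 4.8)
The plan is to analyze both parts through the two--generator subgroup $\langle x,y\rangle$, using that by Lemma~\ref{lem:arc-transitivity for small stabilizer} we have $\overline{\Gamma_G}=\Cay(G,\S)$, where $\S$ is the set of all non-identity point-fixing elements. Since $1\in\mathcal{F}$, membership in $\mathcal{F}$ forces every non-identity element of $\mathcal{F}$ into $\S$, and as each such element lies in a point stabilizer of order $3$ it has order $3$; likewise, for any two distinct $g,h\in\mathcal{F}$ the quotient $g^{-1}h$ lies in $\S$ and so has order $3$. This is the only input I will need from the intersecting condition.

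For part (ii) I would fix $y\in\mathcal{F}\setminus H$ and extract the relations it satisfies with $x$. Since $1,x,x^2,y\in\mathcal{F}$, the quotients $x^{-1}y=x^2y$ and $x^{-2}y=xy$ lie in $\S$, hence have order $3$; together with $x^3=y^3=1$ this gives $x^3=y^3=(xy)^3=(x^{-1}y)^3=1$. These four relations are a defining set for the two--generator Burnside group of exponent $3$, namely the extraspecial group of order $27$ and exponent $3$. Consequently $\langle x,y\rangle$ is a homomorphic image of this group, so it has exponent $3$ and order dividing $27$. As $y\notin\langle x\rangle$ the group is non-cyclic, leaving exactly the possibilities $\langle x,y\rangle\cong\ZZ_3^2$ and $\langle x,y\rangle$ being the non-abelian group of order $27$ of exponent $3$, which is (ii).

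For part (i) I would promote $\{x,y\}$ to the full product $H\langle y\rangle$ via Lemma~\ref{lem:ademir}. First I claim $H\cup\langle y\rangle$ is an intersecting set: the quotients $xy,x^2y$ lie in $\S$ as above, and $y^2=y^{-1}$ fixes the point fixed by $y$; to handle $xy^2,x^2y^2$ I would invoke Lemma~\ref{lem:inversion map is automorphism}, so that $\mathcal{F}^{-1}$ is again an intersecting set, now containing $1,x,x^2,y^2$, whence $x^{-1}y^2=x^2y^2$ and $x^{-2}y^2=xy^2$ lie in $\S$. Thus all cross-quotients $x^{-i}y^{j}$ fix points, $H\cup\langle y\rangle$ is intersecting, and Lemma~\ref{lem:ademir} gives that $H\langle y\rangle$ is an intersecting set. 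Since $y\notin H$ forces $H\cap\langle y\rangle=1$, we get $|H\langle y\rangle|=9$, so this set witnesses $\omega(\overline{\Gamma_G})\ge 9$; as $\mathcal{F}$ is a largest intersecting set, $|\mathcal{F}|\ge 9$.

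The main obstacle is the group-theoretic identification in part (ii) that the four relations $x^3=y^3=(xy)^3=(x^{-1}y)^3=1$ already force exponent $3$ and order dividing $27$. Dropping the last relation leaves the $(3,3,3)$ triangle group $\langle x,y\mid x^3,y^3,(xy)^3\rangle$, which is infinite, so it is precisely $(x^{-1}y)^3=1$ that collapses it to a finite group of order $27$; the step from ``$x,y,xy,x^{-1}y$ have order $3$'' to ``every element has order $3$'' is Burnside's theorem for exponent $3$ on two generators. I would justify this either by citing the classical presentation of that Burnside group, or by a short commutator argument noting that modulo $x^3=y^3=1$ one has $(xy)^3=[y,x]^3$, so $[x,y]^3=1$, and that the extra order-$3$ relations available here (for instance $(x^2y^2)^3=1$ coming from $\mathcal{F}^{-1}$) pin down nilpotency class $2$. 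Everything else reduces to bookkeeping with $\S$ and the two automorphisms of $\overline{\Gamma_G}$.
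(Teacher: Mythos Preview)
Your proof of part~(i) is essentially identical to the paper's: both show that $H\cup\langle y\rangle$ is intersecting via the inversion automorphism and then invoke Lemma~\ref{lem:ademir} to obtain the $9$-element set $H\langle y\rangle$. (A minor point: both you and the paper tacitly read ``maximal'' as ``maximum'' when concluding $|\mathcal{F}|\ge 9$; the argument only produces \emph{some} intersecting set of size $9$ containing $\{1,x,x^2,y\}$, not necessarily $\mathcal{F}$ itself.)

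For part~(ii) your strategy is the same as the paper's---identify $\langle x,y\rangle$ as a quotient of the two-generator Burnside group of exponent $3$---but your sketched commutator justification has a genuine gap. The identity $(xy)^3=[y,x]^3$ does \emph{not} hold modulo $x^3=y^3=1$ alone; it requires nilpotency class $\le 2$, which is precisely what you are trying to establish, so the argument is circular. The paper instead verifies directly, using only $(xy)^3=(x^2y)^3=1$, that $z=[x,y]$ satisfies $z^x=z$ and $z^y=z$ (two short word manipulations), and then observes that $xz=y^{-1}xy$ is a conjugate of $x$, hence of order $3$, forcing $z^3=1$. This hands-on computation is exactly what is needed to replace your flawed step; your alternative of simply citing the known presentation $\langle x,y\mid x^3,y^3,(xy)^3,(x^{-1}y)^3\rangle$ for $B(2,3)$ is of course valid, but the paper's computation is the content behind that citation.
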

\begin{proof}
By assumption there exists  $y\in \mathcal{F}\setminus H$.
It follows that 
$\{1,x,x^2,y\}$ is a clique in $\overline{\Gamma_G}$. 
Since, by Lemma~\ref{lem:inversion map is automorphism},
the mapping $\iota : g \mapsto g^{-1}$ 
($g \in G$) is an automorphism of $\overline{\Gamma_G}$, it follows
that $\{1,x,x^2,y,y^2=y^{-1}\}$ is an intersecting set of $G$,
and so (i) follows by Lemma~\ref{lem:ademir}.

As for part  (ii), let $P=\la x, y \ra$. 
If $z=[x,y]=1$, then  clearly $P \cong \ZZ_3^2$. Hence assume that 
$z  \ne 1$.  We show that $[x,z]=[y,z]=1$. 
It follows from $(x^2y)^3=1$ that $y^2xy^2=x^2yx^2$ and from 
$(xy)^3=1$ that $x^2y^2x^2=yxy$ and $(yx)^2=x^2y^2$.  
Using these we compute 
$$
z^{x}=x [x,y] x^2=y^2xyx^2=(y^2xy^2)y^2x^2=x^2y(x^2y^2x^2)=x^2y^2xy=z, 
$$
$$
z^{y}=y [x,y] y^2=yx^2y^2x=yx(xy^2x)=yxyx^2y=(yx)^2xy=x^2y^2xy=z.
$$
Since $xz=y^2xy$ is a conjugate of $x$ 
it has order $3$, and since $[x,z]=1$, we conclude that so does $z$. 
It follows that $\la z \ra \cong \ZZ_3$, $P/\la z \ra \cong \ZZ_3^2$, and therefore, 
$P$ is isomorphic to the unique non-abelian group of order $27$ with exponent $3$.
\end{proof}

\begin{corollary}
\label{cor:1}
Let $G$ be a transitive group acting on a set $V$,
with point stabilizer 
of order $3$, and admitting 
a maximal basic intersecting set $\mathcal{F}$  of $G$ such that
 $|\mathcal{F}|=4$.
Then no point stabilizer $G_v$, $v\in V$,  is   contained in 
$\mathcal{F}$. In other words, $\mathcal{F}=\{1,x,y,z\}$
with $x$, $y$ and $z$ belonging to different point stabilizers.
\end{corollary}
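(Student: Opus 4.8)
The plan is to reduce the statement almost entirely to Proposition~\ref{lem:2}(i). Since $\mathcal{F}$ is a basic intersecting set it contains the identity, so I would begin by writing $\mathcal{F}=\{1,x,y,z\}$ with $x,y,z$ pairwise distinct and non-identity. Because each of $x,y,z$ is intersecting with $1$, each fixes a point of $V$ and hence lies in some point stabilizer; as every point stabilizer is cyclic of order $3$, each of these three elements has order $3$ and in fact generates the point stabilizer it lies in. I would record here the small but useful observation that an order-$3$ element $g$ fixing a point $v$ forces $G_v=\la g\ra$, and that if $g$ also fixes $w$ then $G_w=\la g\ra=G_v$; thus $g$ determines a single point stabilizer $\la g\ra$, which is what makes the phrase ``belonging to different point stabilizers'' well-posed.

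For the main assertion I would argue by contradiction. Suppose some point stabilizer $G_v$ is contained in $\mathcal{F}$. Since $|G_v|=3$ while $|\mathcal{F}|=4$, the inclusion $G_v\subseteq\mathcal{F}$ is automatically proper, so $\mathcal{F}$ is a maximal intersecting set properly containing the order-$3$ point stabilizer $G_v$ (maximality among basic intersecting sets coincides with maximality among all intersecting sets here, since any larger intersecting set would still contain $1$). Proposition~\ref{lem:2}(i) then yields $|\mathcal{F}|\ge 9$, contradicting $|\mathcal{F}|=4$. Hence no point stabilizer can be contained in $\mathcal{F}$.

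To finish with the reformulation, I would observe that if two of $x,y,z$ generated the same point stabilizer, say $\la x\ra=\la y\ra=\{1,x,x^2\}$, then both $x$ and $x^2$ would lie in $\mathcal{F}$, so the $3$-element subgroup $\la x\ra$ would be contained in $\mathcal{F}$ — precisely the situation just excluded. Therefore $\la x\ra,\la y\ra,\la z\ra$ are three distinct point stabilizers, as claimed. I do not expect a genuine obstacle: all the real content is already packaged in Proposition~\ref{lem:2}(i), and the only points needing care are the trivial cardinality comparison $3<4$ (to guarantee \emph{proper} containment, which is exactly the hypothesis of that proposition) and the uniqueness-of-stabilizer remark that makes the final reformulation meaningful.
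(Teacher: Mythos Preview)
Your proposal is correct and matches the paper's intent: the corollary is stated without proof immediately after Proposition~\ref{lem:2}, and the intended argument is exactly the one you give---a proper containment of a point stabilizer would force $|\mathcal{F}|\ge 9$ by part~(i), contradicting $|\mathcal{F}|=4$. Your additional remarks (that maximality among basic intersecting sets coincides with maximality, and that an order-$3$ element determines a unique point stabilizer $\la g\ra$) are the right way to make the reformulation precise.
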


\begin{example}{\rm
There exists a connected cubic symmetric graph of order $720$ with automorphism group $A$ of order $2160$. 
Using MAGMA \cite{Mag}, one can obtain that the group $A$ is isomorphic to $(3\cdot A_6)\rtimes \ZZ_2$, where $3\cdot A_6$ denotes the triple cover of $A_6$. Let $P$ be a Sylow $3$-subgroup of $A$. Then $P\cong \langle a,b,c\mid a^3=b^3=c^3=1,[a,b]=c, [a,c]=1,[b,c]=1 \rangle$ is the unique non-abelian group of order $27$ with exponent $3$. 
Again, we established with MAGMA that maximum intersecting sets of $A$ are of size $9$ and furthermore every maximum intersecting set of $A$ containing identity is contained in $P$ or one of its $10$ conjugates in $A$. 
All non-central elements of order $3$ from $P$ are conjugate in $A$, and the subgraph of the derangement graph of $A$ induced by these $24$ non-central elements of $P$ is a disjoint union of 8 triangles of the form $\{x,xc,xc^2\}$, with $x$ a non-central element of $P$. Then the maximum intersecting sets containing $1$ that lie inside $P$ are just the transversals  of $\langle c \rangle=Z(P)$ in $P$. Observe that there are $3^8$ such transversals. 
Consequently,  all possibilities between $0$ and $4$ full vertex-stabilizers, contained in such a maximum intersecting set of $A$, can occur.  
}\end{example}





\section{Groups with point stabilizer of order $3$ and intersection density $4/3$}\label{sec:4/3}
\noindent

In this section we give an infinite family of transitive permutation groups
with  intersection density $4/3$. These groups arise 
from the groups $\PSL(2,q)$ 
acting on cosets of a subgroup of order $3$.   
In view of Proposition~\ref{cor:pgl-psl}, 
the corresponding non-canonical basic intersecting sets
are of the form $\{1, x,y,z\}$ where $x$, $y$ and $z$ 
are from different point 
stabilizers.

\begin{theorem}\label{the:istvan}
Let $p$ be a prime, let $q=p^e \equiv 1 \pmod 3$, 
and let  $G$ be a transitive permutation group
arising from $\PSL(2,q)$ acting on cosets of a subgroup of order $3$. 
Then 
\begin{eqnarray} \label{eq:rho}
\rho(G)&=&\begin{cases} 4/3 & \text{if } p \ne 5, \\ 
2 & \text{if } p=5. 
\end{cases}
\end{eqnarray}
\end{theorem}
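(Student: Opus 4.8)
The plan is to reduce the computation of $\omega(\overline{\Gamma_G})$ first to a trace condition in $\SL(2,q)$ and then to a rank bound on a Gram matrix. Since $\overline{\Gamma_G}$ is vertex-transitive, a maximum clique may be taken through the identity, so $\rho(G)=(1+m)/3$, where $m$ is the largest number of pairwise intersecting elements of order $3$; by Proposition~\ref{cor:pgl-psl} (recall $p\ne 3$) these lie in pairwise distinct point stabilizers. Each order-$3$ element has a unique lift $A\in\SL(2,q)$ with $A^3=I$ and $\Tr(A)=-1$, whence $A^{-1}=A^2=-I-A$. Two distinct such elements with lifts $A,B$ intersect iff $A^{-1}B$ has order $3$, i.e. $\Tr(A^{-1}B)=\pm1$ by Proposition~\ref{pro:trace}; substituting $A^{-1}=-I-A$ and using $\Tr(B)=-1$ turns this into the symmetric condition $\Tr(AB)\in\{0,2\}$.

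Writing $A=-\tfrac12 I+v$ with $v$ trace-zero (valid for odd $p$), the vector $v$ lies on the quadric $Q(v):=-\det v=-\tfrac34$ in the three-dimensional space of trace-zero matrices, and one computes $\Tr(AB)=\tfrac12+2\beta(v,w)$ for the polar form $\beta$ of $Q$; thus intersecting translates to $\beta(v,w)\in\{-\tfrac14,\tfrac34\}$. For a clique $\{1,A_1,\dots,A_m\}$ the $m\times m$ Gram matrix of $v_1,\dots,v_m$ then has diagonal $-\tfrac34$, off-diagonal entries in $\{-\tfrac14,\tfrac34\}$, and rank at most $3$. For the upper bound I would exploit that any four vectors are linearly dependent, so every $4\times 4$ principal submatrix is singular over $\mathbb{F}_q$. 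Running over the eleven isomorphism types of $\{\text{type-}0,\text{type-}2\}$ patterns on four points and clearing the factor $4^4$, the determinants are $48,-48,-144,-80,16,-432,-1296$; the only ones with an odd prime factor other than $3$ are the two equal to $-80$, which are divisible by $5$. Hence for $p\notin\{2,3,5\}$ no pattern is singular, forcing $m\le 3$.

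For the matching lower bound I would realize the all-type-$2$ (triangle) pattern: its $3\times 3$ Gram matrix is non-degenerate of discriminant $\sim 3$, while $Q\cong\langle 1,1,-1\rangle$ has discriminant $\sim-1$, so three such vectors exist in $(\mathbb{F}_q^3,Q)$ exactly when $-3$ is a square, i.e. when $q\equiv 1\pmod 3$; moreover a triangle cannot contain an antipodal pair $v_i=-v_j$ (this would force the third vector to have $\beta=-\tfrac34$ with one of them), so the three resulting elements lie in distinct stabilizers. Together with the upper bound this gives $\rho(G)=4/3$ for $p\ne 2,3,5$. The role of $p=5$ is now transparent: the singular patterns $P_3$ and $C_4$ occur only in characteristic $2$ or $5$, so only there can a clique of order-$3$ elements exceed size $3$.

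For $p=5$ I would, first, produce an explicit rank-$3$ configuration of five vectors on the quadric over $\mathbb{F}_5$ (equivalently, five pairwise intersecting order-$3$ matrices in $\SL(2,5)$, whose four-element subsets all induce the singular patterns $P_3$ or $C_4$), which persists over $\mathbb{F}_{5^e}$ and yields a clique of size $6$, so $\rho(G)\ge 2$; and, second, show that six vectors cannot carry a rank-$3$ Gram matrix with off-diagonal entries in $\{-\tfrac14,\tfrac34\}$ over $\mathbb{F}_5$, giving $\rho(G)=2$. The main obstacle is precisely this sharp two-sided count in characteristic $5$—exhibiting the size-$5$ configuration and excluding a size-$6$ one—together with the characteristic-$2$ case, where the substitution $A=-\tfrac12 I+v$ is unavailable and an analogous but separate argument (using $q=2^{2m}$ and the structure of $\overline{\Gamma_G}$) is needed to recover $\rho(G)=4/3$.
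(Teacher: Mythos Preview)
Your approach is genuinely different from the paper's and, for the generic case $p\notin\{2,5\}$, it is correct and rather elegant. The paper proceeds by fixing the explicit vertex $A_0=\mathrm{diag}(r,r^2)$, decomposing its neighbourhood in $\Gamma$ into orbits $\{A_0^{-1}\},O_1,O_1',O_2$ under the centralizer $\Lambda\cong\ZZ_{q-1}$ (plus the transpose involution), tabulating all nine trace values $\Tr(XY)$ explicitly, and then doing a short case analysis on which orbits meet a maximum clique; the cases $p=2$ and $p=5$ fall out of the same computation with no extra machinery. Your route---normalising lifts to trace $-1$, writing $A=-\tfrac12 I+v$, and reducing to a rank-$\le 3$ Gram matrix over the trace-zero quadric---replaces that explicit bookkeeping by a single linear-algebra obstruction: the $4\times4$ principal minors must vanish, and the finite list of their integer values (odd parts lying in $\{1,3,5,9,27,81\}$) isolates $p=5$ automatically. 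This is conceptually cleaner and explains \emph{why} $5$ is special, whereas in the paper $p=5$ emerges only after solving $f(x)=0$ in \eqref{eq:poli}. What the paper's approach buys is uniform completeness: it actually finishes $p=2$ (where your substitution $A=-\tfrac12 I+v$ is unavailable) and $p=5$ (both the explicit clique of size $5$ and the exclusion of size $6$) within the same framework.

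That said, your write-up is at present a plan rather than a proof at exactly the two points you flag. For $p=5$ you still owe (a) an explicit five-vector configuration on the quadric with the required Gram pattern, and (b) the exclusion of six such vectors; neither is hard, but neither is done. For $p=2$ the Gram-matrix set-up collapses entirely, and you have not indicated what replaces it; the paper handles this case directly (there $\Gamma(A_0)=\{A_0^{-1}\}\cup O_1\cup O_1'$, the orbit $O_2$ disappears, and one reads off $\omega(\Gamma)=3$ from the trace table). Finally, your enumeration of the eleven $4\times4$ determinants should be displayed rather than asserted, since the whole upper bound for $p\ne 2,5$ rests on it.
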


\begin{proof}
Observe that all cyclic subgroups of order $3$ are conjugate in 
$\PSL(2,q)$, while the elements of order $3$ form 
two conjugacy classes, with two elements 
$x$ and $x^{-1}$ of order $3$ belonging to 
different conjugacy classes. In  $\PGL(2,q)$, however,
these two conjugacy classes merge into a single class. 
Consequently, every element of order $3$ in
$\PSL(2,q)$ belongs to a point stabilizer (with regards to the
action considered in this theorem).

Let $\Gamma$ be the subgraph of the complement of the
derangement graph 
$\Gamma_G$ of 
$G$ induced by the set of neighbours 
of the identity element of $G$. In other words, the vertices of $\Gamma$
are the elements of order $3$, with $h ,g$  adjacent if and only if 
$h^{-1} g$ is of order $3$. 
Since a clique in $\Gamma$ together with the identity element
gives rise to an intersecting set for $G$, the statement (\ref{eq:rho})
will be proved if we show that
\begin{equation}\label{eq:omega}
\omega(\Gamma)=
\begin{cases} 3 & \text{if } p \ne 5, \\ 
5 & \text{if } p=5,
\end{cases}
\end{equation}
where $\omega(\Gamma)$ denotes the clique number of $\Gamma$.

As before we will think of elements of $\PGL(2,q)$ 
as `matrices', that is, as elements of $\GL(2,q)$. 
Note that, $A, B \in \SL(2,q)$ represent the same 
element in $\PSL(2,q)$ if and only if $B=-A$. 

In view of the observation made in the first paragraph,
the action of  $\PGL(2,q)$  on $V(\Gamma)$ by conjugation
is transitive, and the 
corresponding image is therefore a transitive subgroup of $\Aut(\Gamma)$. 
In particular, every vertex of $\Gamma$ is contained in a maximum clique. 
Let us fix the vertex 
$$
A_0=\mm{r}{0}{0}{r^2}, 
$$ 
where $r^3=1$ and $r \ne 1$.
The existence of such an $r$ is guaranteed 
by the condition $q \equiv 1 \pmod 3$.
Let $\mathcal{K}$ be a maximum clique containing $A_0$. 
The vertex stabilizer of $A_0$ in $\PGL(2,q)$ coincides with the centralizer 
$\Lambda=\{ L_a : a \in \mathbb{F}_q^* \}$ of 
$A_0$ in $\PGL(2,q)$, isomorphic to $\ZZ_{q-1}$, where 
$$
L_a:=\mm{a}{0}{0}{1}, \ a \in \mathbb{F}_q^*.
$$

We now show that the neighborhood  
$\Gamma(A_0) $ splits into the following  
$\Lambda$-orbits: 
$\{A_0^{-1}\},~O_1,~O_1'$ and $O_2$ if $p \ne 2$, and 
$\{A_0^{-1}\},~O_1$ and $O_1'$ if $p=2$, where  
\begin{eqnarray*}
O_1 &=& \big\{ A_x=\mm{-r^2}{x}{0}{-r} : x \in \mathbb{F}_q^* \big\}, \\
O_1' &=& \big\{ A_x^T : x \in \mathbb{F}_q^* \big\}, \\ 
O_2 &=& \Big\{ B_x=\mm{\frac{r}{r-1}}{-2x}{\frac{1}{3x}}{\frac{-1}{r-1}}  : x \in \mathbb{F}_q^*  \Big\}.
\end{eqnarray*}  
It is clear that $A_0$ and $A_0^{-1}$ are adjacent in $\Gamma$. 
In view of Proposition~\ref{pro:trace}, we may choose
$A \in \Gamma(A_0)\setminus\{A_0^{-1}\}$ with
$\Tr(A)=1$. 
Then $A=\mm{a}{b}{c}{d}$ for some $a, b, c, d \in \mathbb{F}_q$ with  
$ad-bc=1$ and $a+d=1$. 
Also, $\Tr(A_0^{-1}A)=r^2a+rd=\pm 1$. 

Suppose first that $p \ne 2$. 
Using the fact  that $a+d=1$, we have $(r^2-r)a=\pm 1 - r$. 
If $(r^2-r)a=1-r$, then we have $a=-r^2$, $d=-r$, and $bc=ad-1=0$. 
If $c=0$, then $A \in O_1$, and if $b=0$, then $A \in O_1'$.
Suppose now that  $(r^2-r)a=-1-r=r^2$. 
Then we have that $a=r/(r-1)$, $d=-1/(r-1)$, and 
$bc=ad-1=-r/(r-1)^2-1=-r/(-3r)-1=-2/3$. It follows that $A \in O_2$.
We have therefore shown that 
$\Gamma(A_0)=\{A_0^{-1}\} \cup O_1 \cup O_1' \cup O_2$. 

Suppose now that $p=2$. Then  $r^2a+rd=1$,
and since $d=1-a$ it follows that 
$a=(r+r^2)a=r+1=r^2$, $d=r$ and $bc=0$. We obtain that 
$\Gamma(A_0)=\{A_0^{-1}\} \cup O_1 \cup O_1'$. 

Straightforward computations show that, for every $x \in \mathbb{F}_q^*$, 
$$
A_x= L_x A_1 L_x^{-1},~A_x^T=L_x^{-1} A_1^T L_x,~\text{and}~ 
B_x=L_x B_1 L_x^{-1}. 
$$
Thus each of $O_1$, $O_1'$ and $O_2$ is a $\Lambda$-orbit, as claimed.   
It can be directly checked that for every $x \in \mathbb{F}_q^*$
$$ 
A_x^{-1}=\begin{bmatrix} -r & -x \\ 0 & -r^2\end{bmatrix}~\text{and}~
B_x^{-1}=\begin{bmatrix} \frac{r+2}{3} & 2x \\ -\frac{1}{3x} & 
\frac{2r+1}{3r} \end{bmatrix}. 
$$ 
Using these, the traces $\Tr(XY)$ for 
$X \in \{A_x^{-1},(A_x^T)^{-1},B_x^{-1}\}$ and $Y \in \{A_y,A_y^T,B_y\}$ can 
be computed directly. The results are collected in Table~1.
In particular, $A_1$ and $A_1^T$ are adjacent, and so 
$|\mathcal{K}| \ge 3$. 
Combining this with Proposition~\ref{cor:pgl-psl}
we have that $A_0^{-1} \notin \mathcal{K}$.

\renewcommand{\arraystretch}{1.8}
\begin{table}[t]
\begin{center}
\begin{tabular}{|c|c|c|c|} \hline
                      & $A_y$ & $A_y^T$ & $B_y$ \\ \hline
$A_x^{-1}$        & $2$    &   $2-xy$    & $-\frac{x}{3y}$ \\ \hline
$(A_x^T)^{-1}$ & $2-xy$    &   $2$    & $2xy$ \\ \hline
$B_x^{-1}$        &  $-\frac{y}{3x}$    &  $2xy$    &  $\frac{2}{3}\Big(1+\frac{x}{y}+\frac{y}{x}\Big)$ 
\\ \hline
\end{tabular}
\\ [+1.5ex]
\caption{The traces $Tr(XY)$, $X \in \{A_x^{-1},(A_x^T)^{-1},B_x^{-1}\}$ and 
$Y \in \{A_y,A_y^T,B_y\}$.}
\end{center}
\end{table}

Assume for the moment that $\mathcal{K} \setminus \{A_0\}$ is not contained 
in $O_2$. It is easy to show that the mapping $\tau: X \mapsto X^T$, 
$X \in V(\Gamma)$, is an automorphism of $\Gamma$. This 
automorphism fixes $A_0$ and $A_0^{-1}$, swaps $O_1$ with $O_1'$, and maps $O_2$ onto itself. Thus $\la \Lambda, \tau \ra$ is transitive 
on $O_1 \cup O_1'$, and so we may, without loss of generality, 
assume that $A_1 \in \mathcal{K}$. 
Now, if $A \in \mathcal{K}\setminus\{A_0, A_1\}$ then 
$\Tr(A_1^{-1}A)=\pm 1$. 
Using Table~1, we find 
$A \in \{A_1^T,A_3^T ,B_{1/3},B_{-1/3} \}$ if $p \ne 2$, and $A=A_1^T$ if $p=2$. 
This shows that $\mathcal{K}=\{A_0,A_1,A_1^T\}$ if $p=2$. 
Let $p \ne 2$. If $p \ne 5$, then using  Table~1 again, 
we obtain that no two vertices in the set 
$\{A_1^T,A_3^T ,B_{1/3},B_{-1/3} \}$ are adjacent, and hence 
$|\mathcal{K}|=3$. If $p=5$, then a direct computation yields   
$\mathcal{K}=\{A_0,A_1,A_1^T,B_{1/3},B_{-1/3}\}$, 
and so $|\mathcal{K}|=5$.

Observe that the above arguments show that 
 $\omega(\Gamma)=3$ for $p=2$, as 
claimed in \eqref{eq:omega}.
It remains to consider the case $p \ne 2$ and 
$\mathcal{K} \setminus \{A_0\} \subseteq O_2$. As $O_2$ is a $\Lambda$-orbit, we may assume that $B_1 \in \mathcal{K}$. 
Also, assume 
that 
$B_z \in \mathcal{K}$ for 
$z \ne 1$.
Then  $(\Tr(B_1^{-1}B_z)-1)
(\Tr(B_1^{-1}B_z)+1)=0$ holds.  
Using Table~1, we find 
that $z$ is a root of the polynomial  
\begin{eqnarray}\label{eq:poli}
f(x)=
\Big(x^2-\frac{1}{2}x+1\Big)\Big(x+2\Big)\Big(x+\frac{1}{2}\Big).
\end{eqnarray}

If $p=5$ then $x^2-\frac{1}{2}x+1=(x+1)^2$, $x+\frac{1}{2}=x+3$,
and so $z\in\{2,3,4\}$.
Consequently, $\mathcal{K}=\{A_0,B_1,B_2,B_3,B_4\}$,
$|\mathcal{K}| = 5$, and 
$\omega(\Gamma)=5$, as claimed in \eqref{eq:omega}. 
Suppose now that $p \ne 5$ and that there exists a root $w$ of
$f(x)$, $w\notin\{1, z\}$, such that $B_w\in\mathcal{K}$.
Then, by Table~1, we have
\begin{equation}\label{eq:a/b}
\frac{2}{3}\Big(1+\frac{z}{w}+\frac{w}{z}\Big)=\pm 1. 
\end{equation}
We complete the proof of Theorem~\ref{the:istvan}
by showing that (\ref{eq:a/b}) together with the fact that 
$z$ and $w$ are roots of the polynomial $f(x)$
given in (\ref{eq:poli})
leads to a contradiction. We go through all possibilities for $z$ and $w$.

If $z=-2$ and $w=-1/2$ then \eqref{eq:a/b} yields $15=0$ or $27=0$ in 
$\mathbb{F}_q$, a contradiction. 

If $z=-2$ and $w^2-1/2w+1=0$ then  \eqref{eq:a/b} implies
$2-4/w-w=\pm 3$, and so 
$w=-2$ or $w=2/3$. Hence, $6=0$ or $10=0$, a contradiction. 

If $z=-1/2$ and $w^2-1/2w+1=0$ then  \eqref{eq:a/b} implies
$2-1/w-4w=\pm 3$, and so 
$w=1$ or $w=-1$. Hence  $3=0$ or $5=0$, a contradiction. 

Finally, if $z, w$ are both roots of $x^2-1/2x+1$
then $zw=1$ and $z+w=1/2$, and so 
$1+z^2+w^2=-3/4$. Dividing both sides with $zw=1$, we have 
$1+z/w+w/z=-3/4$. 
Combining this with \eqref{eq:a/b}, we conclude that
$-3/4 =\pm 3/2$, a contradiction, completing the proof of 
Theorem~\ref{the:istvan}.
\end{proof}


\section{Groups with point stabilizer of order $3$  and large intersection density}\label{sec:ado}
\noindent

In the example below, using an action of the symmetric group $S_n$, $n\ge 4$, we show that maximum intersecting sets in transitive groups with point 
stabilizers of order $3$ can be arbitrarily large.

\begin{example}{\rm
Let  $n \ge 4$. First observe that for two $3$-cycles
$x=(i\, j \, k), y \in S_n$ we have  that 
$x^{-1}y$ is also a $3$-cycle if and only if 
$y=(i\,k\,j)$ or 
$
y \in \{\, (i\,j\,l),~(j\,k\,l),~(k\,i\,l) \colon 1 \le l \le n-1, l \notin \{i,j,k\}\, \}.
$

Let $G$ be the permutation group induced by the action of $S_n$ on cosets of $\la (1\, 2\, 3) \ra$. We show   that a maximum intersecting set 
of $G$ has size $n-1$. 
Let $\Gamma$ be the complement of the derangement graph of $G$. Observe that the set of neighbours of $id$ in $\Gamma$ is the set of all $3$-cycles from $S_n$. Let $\Gamma_1$ be the subgraph of $\Gamma$ induced by the set of
 common neighbours of $id$ and $(1\,2\,3)$. 
In view of the first paragraph  it follows that 
$V(\Gamma_1)=\{(1\,3\,2\} \cup \{(1\,2\,i),(2\,3\,i),(3\,1\,i):
i \in \{4,5,\ldots,n\}\}$. Furthermore, it follows that the set 
$\{(1\,2\,i):i \in \{4,5,\ldots,n\}\}$ induces a clique in 
$\Gamma_1$. 
Analogously, the sets $\{(2\,3\,i):i \in \{4,5,\ldots,n\}\}$ 
and $\{(3\,1\,i):i \in \{4,5,\ldots,n\}\}$ also induce cliques. 
Moreover, there are no other edges in $\Gamma_1$, that is, $\Gamma_1$ is a 
disjoint union of an isolated vertex (corresponding to $(1\,3\,2)$) and three 
cliques of size $n-3$. Consequently, $\omega(\Gamma_1)=n-3$. 
A clique in $\Gamma$ is therefore obtained from $id$, $(1\, 2\, 3)$
and any of these three maximum cliques in $\Gamma_1$.
In view of arc-transitivity of $\Gamma$, it follows that 
$\omega(\Gamma)=2+n-3=n-1$.
Hence,   $\rho(G)=\omega(\Gamma)/3=(n-1)/3$.
}\end{example}

The simplicity of this construction is a consequence of the fact that
the existence of a connected self-paired orbital
was not required
(see the first paragraph of Section~\ref{sec:p}). With this additional restriction
the problem of finding such groups becomes somewhat more complicated. In the theorem below we give such a construction
using the group $\PSL(2,3^n)$ for $n\ge 3$.
The group $\PSL(2,9)$ is not considered here because
none of its orbitals in the action on cosets of a subgroup of order $3$
is self-paired and connected.  This is a direct consequence of the fact that, by Proposition~\ref{thm:23-gen},
$\PSL(2,9)$ cannot be generated by an involution and an element
of order $3$. However, by Proposition~\ref{thm:23-gen},
for $n\ge 3$ the group $\PSL(2,3^n)$
is generated by an involution and an element of order $3$, 
and thus the theorem below
indeed gives transitive permutation groups with point
stabilizer of order $3$ having connected self-paired orbital.

\begin{theorem}\label{the:ado}
Let $G$ be a transitive permutation group
arising from $\PSL(2,3^n)$, $n\geq 3$, 
acting on cosets of a subgroup of order $3$. Then 
\begin{eqnarray} \label{eq:rho-ado}
\rho(G)&=&\begin{cases} 3^{n-1} & \text{if $n$ is odd}, \\ 
3^{n/2-1} & \text{if $n$ is even}. 
\end{cases}
\end{eqnarray}
\end{theorem}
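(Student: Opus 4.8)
The plan is to mimic the structure of the proof of Theorem~\ref{the:istvan}, analyzing the clique number $\omega(\Gamma)$ of the graph $\Gamma$ induced by the elements of order $3$ in $G=\PSL(2,3^n)$, where two such elements $h,g$ are adjacent iff $h^{-1}g$ has order $3$. The crucial structural difference from the $p\ne 3$ case is that now the base field has characteristic $3$, so the trace criterion of Proposition~\ref{pro:trace} no longer applies: an element $A\in\SL(2,q)$ with $q=3^n$ satisfies $A^3=\pm I$ precisely when $\Tr(A)=\pm 1=\mp 1$, i.e. the two ``$1$ or $-1$'' conditions collapse. In fact in characteristic $3$ the elements of order $3$ are exactly the nontrivial \emph{unipotent} elements, those with $\Tr(A)=\pm 2 = \mp 1$ and $A\ne\pm I$. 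The first step, therefore, is to reparametrize: I would fix a Sylow $3$-subgroup, observe it is elementary abelian of the form $U=\{\,I+tN:t\in\mathbb{F}_q\,\}$ for a fixed nilpotent $N$ with $N^2=0$, and identify the vertices of $\Gamma$ with the conjugates of these unipotent matrices.

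Next I would compute the neighborhood of a fixed vertex $A_0$ under the action of its centralizer $\Lambda$ (a maximal torus, cyclic of order $q-1$, or the relevant normalizer), exactly as the decomposition into $\Lambda$-orbits $O_1,O_1',O_2$ was carried out before. The key new phenomenon is that a whole coset of the unipotent radical now consists of pairwise-adjacent order-$3$ elements: if $A_0=I+sN$ and $A=I+tN$ lie in the same $U$, then $A_0^{-1}A=I+(t-s)N$ is again unipotent (or identity), so the \emph{entire} group $U\cong\mathbb{Z}_3^{\,n}$, minus the identity, together with $1$, forms a clique. This already produces a clique of size roughly $q=3^n$, which is what drives the intersection density up to $3^{n-1}$; the content of the theorem is that one cannot do much better and that the even/odd distinction in the exponent reflects exactly how many such unipotent ``directions'' can be assembled into a single clique before an obstruction like that in Proposition~\ref{cor:pgl-psl} intervenes. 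So the plan is to show $\omega(\Gamma)=3^n$ when $n$ is odd and $\omega(\Gamma)=3^{n/2}$ when $n$ is even, whence $\rho(G)=\omega(\Gamma)/3$ gives \eqref{eq:rho-ado}.

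To pin down the exact clique number I would exploit the conjugacy data of Proposition~\ref{prop:3-class}: when $n$ is odd there is a single conjugacy class of subgroups of order $3$, whereas for even $n$ there are two, and the two classes of order-$3$ \emph{elements} (a cyclic subgroup and its inverse generator) behave asymmetrically under adjacency. The maximum cliques should be describable as (the nonidentity elements of) a full unipotent subgroup, or a suitable subfield-subgroup thereof, together with the identity. For the upper bound, the natural tool is a trace/algebraic identity in the spirit of Proposition~\ref{cor:pgl-psl}: if $\{I,A_1,\dots,A_k\}$ is a clique of unipotent matrices, then writing $A_i=I+t_iN_i$ and examining when all pairwise differences remain unipotent forces the direction vectors $N_i$ to span a subspace that is \emph{totally singular} for the relevant quadratic/bilinear form coming from the determinant condition. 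Counting the maximal totally singular subspaces, or equivalently the maximal subfields $\mathbb{F}_{3^m}\le\mathbb{F}_{3^n}$ over which the configuration can be simultaneously unipotent, should yield $m=n$ in the odd case and $m=n/2$ in the even case, matching a Paley-type bound (cf.\ Proposition~\ref{prop:clique number of Paley}) of $\sqrt{q}=3^{n/2}$.

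The main obstacle will be the sharp upper bound in the even case. Producing the clique of size $3^n$ (odd $n$) or $3^{n/2}$ (even $n$) is a matter of exhibiting the right unipotent or subfield subgroup, but proving that \emph{no} larger clique exists requires controlling all ways in which order-$3$ elements from different maximal tori, not just from a single unipotent subgroup, can be mutually adjacent. I expect the hard part to be ruling out ``mixed'' cliques that combine unipotent elements with genuinely different eigenvalue-type elements, and in the even case showing that the field-of-definition obstruction cuts the attainable size precisely at $\sqrt{q}$ rather than at some intermediate power of $3$. The Paley-graph connection suggests that $\Gamma$, restricted appropriately, contains or is governed by a structure whose clique number is exactly $\sqrt{q}$ when $q$ is an even power, and the delicate point will be to transfer Proposition~\ref{prop:clique number of Paley} into this non-abelian setting rigorously, presumably via the subfield structure of $\mathbb{F}_{3^n}$ and careful trace computations organized, as before, in a table of $\Tr(XY)$ values.
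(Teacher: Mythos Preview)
Your overall architecture matches the paper's: identify the Sylow $3$-subgroup $K=\{M_x=\bigl[\begin{smallmatrix}1&x\\0&1\end{smallmatrix}\bigr]:x\in\mathbb{F}_q\}$ as a large clique, invoke Proposition~\ref{prop:3-class} for the odd/even split, and in the even case recognize the subgraph induced on $K$ as a Paley graph so that Proposition~\ref{prop:clique number of Paley} gives the clique size $3^{n/2}$. Where you diverge is in the upper-bound mechanism, and here the paper's route is considerably simpler than the totally-singular-subspace and subfield-obstruction program you sketch.

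The paper's key observation is a one-line trace computation: if $A=\bigl[\begin{smallmatrix}a&b\\c&1-a\end{smallmatrix}\bigr]$ is a neighbor of $M_x$ lying \emph{outside} $K$, then $\Tr(M_x^{-1}A)=2cx+1=\pm1$ forces $c=-1/x$ (the case $c=0$ puts $A$ back in $K$). Hence the $(2,1)$-entry of any external neighbor of $M_x$ is determined by $x$, so two distinct elements of $K$ share \emph{no} neighbor outside $K$. This immediately dichotomizes maximum cliques: either the clique lies entirely in a single Sylow $3$-subgroup, or it meets each of the $q+1$ Sylows in at most one vertex. For the latter ``transversal'' type, a second short computation (fixing $A_0\in K_0$ and $A_1\in K_1$ and solving $\det(A)=\pm1$ for common neighbors) caps its size at $6$, which is beaten by the in-Sylow clique for all $n\ge3$. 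No bilinear-form geometry or subfield counting is needed.

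Two small corrections to your write-up: in characteristic $3$ every element of order $3$ is unipotent, so there are no ``eigenvalue-type'' order-$3$ elements to worry about---the ``mixed'' cliques are simply those drawing from several Sylow $3$-subgroups. And the relevant clique number is $\omega(\Gamma)=3^n-1$ (odd $n$) or $3^{n/2}-1$ (even $n$), since $\Gamma$ excludes the identity; adjoining $1$ gives the intersecting set of size $3^n$ or $3^{n/2}$, whence $\rho(G)=3^{n-1}$ or $3^{n/2-1}$.
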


\begin{proof}
Consider the subgroup $K$ of $\PSL(2,3^n)$ consisting 
of matrices 
$$
M_x=\mm{1}{x}{0}{1}, \ x \in \mathbb{F}_{3^n},
$$ 
(where a matrix is a `matrix' as in the proof of Theorem~\ref{the:istvan}).  

Observe that $K\cong \ZZ_3^n$  is a Sylow $3$-subgroup of $\PSL(2,3^n)$.  
Observe also that $K$ consists precisely of those elements  of $\PSL(2,3^n)$ fixing the subspace $\la [1,0]^T \ra$. There are $3^n+1$ conjugates of $K$, each conjugate corresponding to a Sylow $3$-subgroup of the stabilizer of some projective point.
In what follows, we shall consider the action of $\PSL(2,3^n)$ on the projective line, whose points are the one dimensional subspaces $\la [a,b]^T \ra$, $a, b \in \mathbb{F}_q$.

Suppose first that $n$ is odd. Then, by Proposition~\ref{prop:3-class} any two subgroups of order $3$ in $\PSL(2,3^n)$ are conjugate.
Let $\Gamma$ be the subgraph of the complement of the derangement graph of $G$ induced by the neighbours of the identity matrix $I\in G$. 
The vertex set $V(\Gamma)$ can be partitioned into $3^n+1$ cliques,
each of size $3^n-1$,  corresponding to 
$K\setminus \{I\}$ and its conjugates. 

Consider a neighbour of $M_x$ outside of $K$. (Note that,
since Sylow 3-subgroups of $\PSL(2,3^n)$ and $\PGL(2,3^n)$ are of the same order, all elements of order $3$ are contained in $\PSL(2,3^n)$.) We can assume that this neighbour is of the form $A=\mm{a}{b}{c}{1-a}$. 
Using the assumption that $M_x^2A$ is an elements of order $3$, it follows that its trace must be equal to $\pm 1$. 
If $2cx+1=1$, it follows that $c=0$, but this would imply that $A$ is in $K$, a contradiction. Hence $2cx+1=-1$, and we obtain that $c=-1/x$. 
This implies that all neighbours of $M_x$ outside of $K$ have the 
 entry in the first column and second row equal to $-1/x$. Consequently, no two different elements of $K$ have a common neighbour outside of $K$. This shows that given an edge of $K$, the maximum clique that contains this edge is the one induced by $K$. 
Hence, the maximum clique of $\Gamma$ can be the one induced by $K$, or it can consist of at most one element from each of the conjugates of $K$. Since there are $3^n+1$ such conjugates, it follows that $\omega(\Gamma)\leq 3^n+1$. Adding the identity, which is not in $\Gamma$,
it follows that $\rho(G)\leq (3^n+2)/3$.  

Let  $q=3^n$, and let  $K=K_0, K_1, \ldots, K_q$ be
 the conjugates of $K_0$, where
we may assume that $K_1$ consists of lower triangular matrices with 1 on diagonal fixing the projective point corresponding to $\la [0,1]^T \ra$. 
Suppose that there exists a maximal clique $\mathcal{K}$ of $\Gamma$
not contained in $K_i$ for any $i$ and such that $|\mathcal{K}|\ge 3^n$.
By the argument from the first paragraph, we know that $|\mathcal{K}\cap K_i| \leq 1$, for each $i$, and so we may assume that $\mathcal{K}\cap K_i=\emptyset$ for at most one $i\in \{0,1,\ldots,q\}$. Since   
$\PSL(2,q)$ acts $2$-transitively on projective points, we may assume,
without loss of generality, that $\mathcal{K}\cap K_0\ne \emptyset$
and that $\mathcal{K}\cap K_1\ne \emptyset$.

Let $A_0=M_x$. 
By calculation we see that the only neighbour of $A_0$ 
in $K_1$ is $A_1=\mm{1}{0}{ 1/x}{1}$.
Let $A=\mm{a}{b}{c}{1-a}$ be a common neighbour of $A_0$ and $A_1$. 
Observe that $\Tr(A_0^2A)=2cx+1$ and $\Tr(A_1^2A)=\frac{2b}{x}+1$.
Using the assumption that $A_0^2A$ and $A_1^2A$ are elements of order $3$, it follows that their traces must be equal to $\pm 1$, implying that $c=-1/x$ and $b=-x$.
Hence   \[
A=
  \begin{bmatrix}
    a & -x\\
    \frac{-1}{x} & 1-a
  \end{bmatrix}.
\]
Since $det(A)=a-a^2-1$, and since $A$ is in $\PSL(2,q)$, it follows that $a-a^2-1=\pm 1$.
There are at most $4$ elements $a$ in 
$\mathbb{F}_q$ that satisfy this equation. 
It follows that for a fixed $x\in \mathbb{F}_q^*$ there are at most four possibilities
for the matrix $A$, and therefore 
$A_0$ and $A_1$ can have at most 4 common neighbours, that is the clique containing the edge $\{A_0,A_1\}$ can have size at most $6$. Since $3^n-1>6$, it follows that $\omega(\Gamma)=3^n-1$. 
This shows that $\rho(G)=3^n/3=3^{n-1}$.

Suppose now that $n$ is even. The elements $M_x$ of $K$ split into two conjugacy classes, depending on whether $x$ is a square in 
$\mathbb{F}_{3^n}^*$
or a non-square.
It is easy to see that the subgraph of $\overline{\Gamma_G}$ 
induced by $K$ is isomorphic to the Paley graph or its complement. 
Combining Proposition~\ref{prop:clique number of Paley}
with the fact that the Paley graph and its complement  are isomorphic  
we obtain that a maximum clique of the latter graph
$\overline{\Gamma_G}[K]$ is of size $3^{n/2}$.
Using the same argument as for $n$ odd we obtain that a clique in 
$\overline{\Gamma_G}$
that is not contained inside 
$K_i$, $i\in\{0,1,\ldots,q\}$ can have size at most $6$. Since $n\geq 4$, it follows that $3^{n/2}\geq 9$, and so the clique number of 
$\overline{\Gamma_G}$ is $3^{n/2}$, implying that $\rho(G)=3^{n/2-1}$. 
\end{proof}

As a final remark regarding possible future work, we would like to 
pose the following problem which generalizes the 
results of this paper for cyclic groups of orders $2$
and $3$. 
 
\begin{problem}
Given a group $H$ determine all possible intersection densities 
of transitive permutation groups having point stabilizers 
isomorphic to $H$.
\end{problem}

 \end{document}